\newtheorem{lma}{Lemma}[section]
\newaliascnt{thmCt}{lma}
\newtheorem{thm}[thmCt]{Theorem}
\newaliascnt{corCt}{lma}
\newtheorem{cor}[corCt]{Corollary}
\newaliascnt{prpCt}{lma}
\newtheorem{prp}[prpCt]{Proposition}
\newtheorem*{thm*}{Theorem}
\theoremstyle{definition}
\newaliascnt{pgrCt}{lma}
\newtheorem{pgr}[pgrCt]{}
\newaliascnt{dfnCt}{lma}
\newaliascnt{rmkCt}{lma}
\newtheorem{rmk}[rmkCt]{Remark}
\newaliascnt{rmksCt}{lma}
\newtheorem{rmks}[rmksCt]{Remarks}
\newaliascnt{exaCt}{lma}
\newtheorem{exa}[exaCt]{Example}
\newaliascnt{qstCt}{lma}
\newtheorem{qst}[qstCt]{Question}
\DeclareMathOperator{\id}{id}
\DeclareMathOperator{\ev}{ev}
\DeclareMathOperator{\Sh}{Sh}
\newcommand{\andSep}{\,\,\,\text{ and }\,\,\,}
\newcommand{\ca}{{\mbox{$C^*$-al}\-ge\-bra}}
\newcommand{\Bdd}{\mathcal{B}}
\newcommand{\stHom}{${}^*$-homomorphism}
\newcommand{\CAlgebra}[2]{C^\ast\big\langle #1\ |\ #2\big\rangle}
\newcommand{\CAlgebraUnital}[2]{C^\ast_1\big\langle #1\ |\ #2\big\rangle}
\newcommand{\CAlgebraBig}[2]{C^\ast\left\langle{ \begin{array}{c} #1 \end{array} {\left| \begin{array}{l} #2 \end{array} \right\rangle}} \right.}
\newcommand{\vect}[1]{\mathbf{#1}}
\newcommand{\KK}{{\mathbb{K}}}
\newcommand{\RR}{{\mathbb{R}}}
\newcommand{\NN}{{\mathbb{N}}}
\newcommand{\ZZ}{{\mathbb{Z}}}
\newcommand{\QQ}{{\mathbb{Q}}}
\newcommand{\CC}{{\mathbb{C}}}
\newcommand{\FF}{{\mathbb{F}}}
\title[Limits of semiprojectives]{Inductive limits of semiprojective $C^*$-algebras}
\author{Hannes Thiel}
\address{Hannes Thiel
Mathematisches Institut, Fachbereich Mathematik und Informatik der
Universit\"at M\"unster, Einsteinstrasse 62, 48149 M\"unster, Germany.}
\email{hannes.thiel@uni-muenster.de}
\urladdr{www.math.uni-muenster.de/u/hannes.thiel/}
\thanks{ The author was partially supported by the Deutsche Forschungsgemeinschaft (SFB 878 Groups, Geometry \& Actions).}
\subjclass[2010]%
{Primary
46L05, 
46M10; 
Secondary
46L85, 
46M20, 
54C55, 
54C56, 
55M15, 
55P55
}
\date{\today}
\begin{document}

\begin{abstract}
We prove closure properties for the class of \ca{s} that are inductive limits of semiprojective \ca{s}.
Most importantly, we show that this class is closed under shape domination, and so in particular under shape and homotopy equivalence.
It follows that the considered class is quite large.
It contains for instance the stable suspension of any nuclear \ca{} satisfying the UCT and with torsion-free $K_0$-group.
In particular, the stabilized \ca{} of continuous functions on the pointed sphere is isomorphic to an inductive limit of semiprojectives.
\end{abstract}

\maketitle

\section{Introduction}

Shape theory is a tool to study global properties of metric spaces that have singularities.
This is done by approximating the space under consideration by nicer spaces without singularities.
Properties of the original space are encoded in the approximating system.

This idea has been transferred to the study of \ca{s} by Blackadar in \cite{Bla85ShapeThy}.
The building blocks of this noncommutative shape theory are the semiprojective \ca{s};
see \autoref{sec:shape} for definitions.
One therefore seeks to approximate a given \ca{} $A$ by semiprojective \ca{s}.
This leads to the following fundamental question:

\begin{qst}[{Blackadar, \cite[4.4]{Bla85ShapeThy}}]
\label{qst:sss}
Is every separable \ca{s} isomorphic to a sequential inductive limit of separable, semiprojective \ca{s}?
\end{qst}

The analogue of this question for metric space has a positive answer:
Every metric space is homeomorphic to an inverse limit of absolute neighborhood retracts.

Since the answer to \autoref{qst:sss} is unknown, Blackadar developed a more general notion of approximation, called a \emph{shape system};
see \autoref{sec:shape} for details.
Every separable \ca{} is isomorphic to the inductive limit of a shape system.

In contrast, we say that a separable \ca{} has a \emph{strong shape system} if it is isomorphic to a sequential inductive limit of separable, semiprojective \ca{s}.
Thus, \autoref{qst:sss} asks if every separable \ca{} has a strong shape system.
The main result of this paper is:


\begin{thm*}[{\ref{prp:sss_shDomination}}]
Let $A$ and $B$ be separable \ca{s} such that $A$ is shape dominated by $B$.
Then, if $B$ has a strong shape system, so does $A$.
\end{thm*}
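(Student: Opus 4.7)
The plan is to realise $A$ as the shape image of an idempotent endomorphism of $B$, and then lift that idempotent to an honest sequential inductive system of semiprojective pieces. Write $B = \varinjlim_n B_n$ with each $B_n$ separable semiprojective, and let $\underline{\alpha}\colon A\to B$ and $\underline{\beta}\colon B\to A$ witness the shape domination, so that $\underline{\beta}\circ\underline{\alpha} = \underline{\id_A}$ in the shape category. Then $\underline{e} := \underline{\alpha}\circ\underline{\beta}\colon B\to B$ is a shape idempotent, because
\[
\underline{e}^{\,2} = \underline{\alpha}\circ(\underline{\beta}\circ\underline{\alpha})\circ\underline{\beta} = \underline{\alpha}\circ\underline{\beta} = \underline{e},
\]
and $A$ is its shape image, with $\underline{\beta}$ and $\underline{\alpha}$ providing the splitting through $A$.

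For each building block $B_n$, the restriction of $\underline{e}$ is a shape morphism out of a semiprojective \ca{}, hence represented (up to homotopy) by an actual \stHom{} $B_n\to B$, which by semiprojectivity lifts further to $\tilde e_n\colon B_n \to B_{m_n}$ for some $m_n\geq n$. Passing to a sparse subsequence $n_1<n_2<\cdots$ with $n_{k+1}\geq m_{n_k}$ yields a sequential inductive system
\[
B_{n_1}\xrightarrow{\tilde e_1} B_{n_2}\xrightarrow{\tilde e_2} B_{n_3}\xrightarrow{\tilde e_3}\cdots
\]
of separable semiprojective \ca{s}, whose limit I call $C$. Simultaneously, the restrictions of $\underline{\beta}$ to the $B_{n_k}$ are represented by honest $*$-homomorphisms $\beta_{n_k}\colon B_{n_k}\to A$, and the idempotency of $\underline{e}$ supplies homotopies $\beta_{n_{k+1}}\circ\tilde e_k \sim \beta_{n_k}$. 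After rectifying this homotopy-commutative diagram to a strictly commuting one (by mapping-cylinder replacements that preserve semiprojectivity of the pieces), the $\beta_{n_k}$ assemble into a \stHom{} $\Phi\colon C\to A$. The shape morphism $\underline{\alpha}$ produces a shape-theoretic inverse, and idempotency of $\underline{e}$ ensures that the telescope literally converges onto the image of $\underline{e}$, namely $A$.

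The hard part is the final identification $C\cong A$. The maps $\tilde e_n$ and $\beta_{n_k}$ only represent the underlying shape data up to homotopy, so the diagram produced is merely homotopy-coherent; converting it into a strictly commuting inductive system whose actual \ca{}-theoretic limit is isomorphic, and not merely homotopy equivalent, to $A$ requires coherently choosing all homotopies using the shape-level identity $\underline{e}^{\,2}=\underline{e}$, and then inserting mapping cylinders in a way that does not destroy semiprojectivity of the building blocks. This rectification step, rather than the conceptual ``splitting the idempotent'' argument, is the technical heart of the proof; the semiprojectivity of each $B_n$ is what makes both the lifting of shape morphisms and the cylinder replacements feasible.
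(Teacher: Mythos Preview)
Your approach is conceptually different from the paper's, and as written it has a genuine gap at precisely the point you flag as ``the technical heart.''

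The circularity problem is this: your construction produces a \ca{} $C=\varinjlim_k (B_{n_k},\tilde e_k)$ that visibly has a strong shape system (each $B_{n_k}$ is semiprojective), together with a homotopy-coherent cone $\beta_{n_k}\colon B_{n_k}\to A$. Rectifying that cone yields at best an honest \stHom{} $\Phi\colon C\to A$; the ``shape-theoretic inverse'' you invoke from $\underline{\alpha}$ then gives $C\sim_{\Sh}A$, not $C\cong A$. But deducing ``$A$ has a strong shape system'' from ``$A$ is shape equivalent to something with a strong shape system'' is exactly the theorem you are proving. Your assertion that ``the telescope literally converges onto the image of $\underline{e}$, namely $A$'' is the entire content of the result and is nowhere argued.

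Separately, the phrase ``mapping-cylinder replacements that preserve semiprojectivity of the pieces'' is not justified and is in fact not what the paper proves. The paper's main technical result (its \autoref{prp:mapping_cylinder_limit_SP}, occupying all of \autoref{sec:mappingCyl}) is that if $\varphi\colon D\to E$ has $D$ semiprojective, then the mapping cylinder $Z_\varphi$ has a \emph{strong shape system}; it is not claimed to be semiprojective itself. The proof requires an explicit presentation of $Z_{\varphi^+}$ by generators and relations, a softening of those relations to produce genuinely semiprojective approximants $Z_{\varphi^+}^{(n)}$, and then Enders' theorem that ideals with projective quotient in a semiprojective \ca{} are semiprojective. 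None of this is a routine rectification.

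The paper sidesteps both issues by never forming $C$ or the idempotent $\underline e$. Instead it takes an \emph{arbitrary} shape system $(A_n,\gamma_{n+1,n})$ for $A$, uses the domination to factor $\gamma_{n+1,n}\simeq\beta_n\circ\alpha_n$ through the semiprojective $B_n$, and then (\autoref{prp:factorization}) factors $\gamma_{n+1,n}$ \emph{strictly} through the mapping cylinder $Z_{\beta_n}$. This gives a strictly commuting intertwining of $(A_n)$ with $(Z_{\beta_n})$, hence an honest isomorphism $A\cong\varinjlim_n Z_{\beta_n}$ of \ca{s} with no rectification needed. Each $Z_{\beta_n}$ has a strong shape system by \autoref{prp:mapping_cylinder_limit_SP}, and a closure-under-limits result finishes the proof. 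The point is that the mapping cylinders absorb the homotopies \emph{inside} a system whose limit is already known to be $A$, rather than building a new limit and then trying to identify it with $A$ after the fact.
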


Consequently, if two separable \ca{s} are shape equivalent (in particular, if they are homotopy equivalent), then one has a strong shape system if and only if the other does;
see \autoref{prp:sss_shEquivalence}.
In \autoref{prp:sss_permanence}, we summarize closure properties for the class of separable \ca{s} that have a strong shape system.

We apply \autoref{prp:sss_shDomination} to show that many nuclear \ca{s} are inductive limits of semiprojective \ca{s}.

\begin{thm*}[{\ref{prp:ssa_for_HS}}]
Let $A$ be a separable, stable, nuclear, homotopy symmetric \ca{} satisfying the UCT.
Assume that $K_0(A)$ is torsion-free.
Then $A$ has a strong shape system.
\end{thm*}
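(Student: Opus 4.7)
My strategy is to find a model \ca{} $B$ that manifestly has a strong shape system and has the same $K$-theory as $A$, then exhibit a shape equivalence between $A$ and $B$, so that \autoref{prp:sss_shEquivalence} finishes the argument. For the construction of $B$, I would build a sequential inductive limit of stable, separable, nuclear, semiprojective building blocks realising the prescribed countable torsion-free $K_0$ and arbitrary countable $K_1$. Natural candidates are stabilised matrix algebras over $\CC$ and over $C(S^1)$, and more generally stabilisations of noncommutative one-dimensional CW-complexes, all of which are known to be semiprojective by classical results (Loring, Eilers--Loring--Pedersen, Blackadar). The torsion-freeness of $K_0(A)$ is essential here: it allows us to avoid dimension-drop interval algebras, whose $K_0$-torsion is the standard obstruction to semiprojectivity. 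By construction, $B$ is a sequential inductive limit of semiprojectives and hence has a strong shape system.

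For the bridge between $A$ and $B$, I would invoke the UCT: both algebras are separable, nuclear, and UCT, and $K_*(B) \cong K_*(A)$ by construction, so there is an invertible class in $KK(A,B)$. The decisive step is to upgrade this $KK$-equivalence to a shape equivalence, and this is precisely where the homotopy symmetry hypothesis is used. By Dadarlat's theory of homotopy symmetric \ca{s} and asymptotic morphisms, for separable, stable, nuclear, homotopy symmetric \ca{s} satisfying the UCT, every $KK$-equivalence is implemented by $\ast$-homomorphisms up to homotopy, and hence yields a shape (in fact homotopy) equivalence. For this to apply one also needs $B$ to be homotopy symmetric; this can be arranged by taking $B$ to be a stable suspension, using that suspensions of separable \ca{s} are homotopy symmetric, and choosing the building blocks so that the stable suspension of a semiprojective-limit remains a semiprojective-limit.

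Once a shape equivalence $A \simeq B$ is in hand, \autoref{prp:sss_shEquivalence} immediately yields that $A$ has a strong shape system. I expect the hardest step to be the lift from $KK$ to shape in the second paragraph: one has to invoke the correct Dadarlat-type theorem and verify that the chosen model $B$ is itself homotopy symmetric. A secondary but non-trivial point is the realisation step, arranging that an arbitrary countable torsion-free group and an arbitrary countable group can be simultaneously realised as $K_0$ and $K_1$ of a stable, nuclear, semiprojective-limit \ca{}; this reduces to standard $KK$-theoretic model constructions, but must be done with care to stay inside the semiprojective class, which is exactly what the torsion-free hypothesis on $K_0$ makes possible.
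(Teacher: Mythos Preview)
Your overall strategy matches the paper's: build a model $B$ with the same $K$-theory and a strong shape system, deduce shape equivalence from $KK$-equivalence via homotopy symmetry, then apply \autoref{prp:sss_shEquivalence}. The paper makes the chain precise as $K_\ast(A)\cong K_\ast(B)\Rightarrow A\sim_{KK}B\Rightarrow A\sim_E B\Rightarrow A\sim_{\Sh}B$, using the UCT, the coincidence of $KK$- and $E$-theory for nuclear \ca{s}, and the Dadarlat/Dadarlat--Loring identification of $E$-equivalence with shape equivalence for stable homotopy symmetric \ca{s} (\autoref{prp:ShEquiv-EEquiv_for_HS}). Your claim that one even gets a \emph{homotopy} equivalence is stronger than what is available or needed.

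The real gap is in your model construction. Your proposed building blocks (stabilised matrix algebras over $\CC$ and $C(S^1)$) are not homotopy symmetric, and your fix---``take $B$ to be a stable suspension \ldots\ choosing the building blocks so that the stable suspension of a semiprojective-limit remains a semiprojective-limit''---is exactly the hard point, asserted but not argued. The paper handles the $K_1$-part with $\Sigma$ and the dimension-drop algebras $I_n$, which are both semiprojective \emph{and} homotopy symmetric (so your remark that torsion-freeness lets one ``avoid dimension-drop interval algebras, whose $K_0$-torsion is the standard obstruction to semiprojectivity'' is garbled: $I_n$ has torsion in $K_1$, not $K_0$, and it \emph{is} semiprojective). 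For the $K_0$-part the paper takes $A_0=\Sigma^2 B_0\otimes\KK$ with $B_0$ AF and $K_0(B_0)\cong G_0$; this is homotopy symmetric, but showing it has a strong shape system reduces to $\Sigma^2\CC\otimes\KK$, and that is \emph{not} done by exhibiting semiprojective building blocks directly. Instead one uses that $q\CC$ is semiprojective, that $q\CC\otimes\KK\sim_{\Sh}\Sigma^2\CC\otimes\KK$ (Shulman, Dadarlat), and then applies the paper's own main theorem to transport the strong shape system across that shape equivalence (\autoref{exa:sss_S2K}). Your proposal misses this bootstrap; without it, the realisation of torsion-free $K_0$ inside the homotopy-symmetric world with a strong shape system is not justified. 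The genuine obstruction from torsion in $K_0$ is that no homotopy-symmetric model with a strong shape system and $K_0\cong\ZZ_n$ is known---this is the open \autoref{qst:exist_sssHS}.
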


In particular, if $A$ is a separable, nuclear \ca{} satisfying the UCT, then the stable suspension $\Sigma A\otimes\KK$ has a strong shape system if $K_1(A)$ is torsion-free, and the stable second suspension $\Sigma^2 A\otimes\KK$ has a strong shape system if $K_0(A)$ is torsion-free;
see \autoref{prp:sss_suspension_nuclear}.

Our results provide a partial answer to \autoref{qst:sss} by substantially increasing the class of \ca{s} that are known to have a strong shape system.
Moreover, we obtain many new concrete examples of \ca{s} that are inductive limits of semiprojective \ca{s}.
For example, let $A$ be a UCT-Kirchberg algebra.
Then $C([0,1]^n,A)$ has a strong shape system for every $n\in\NN$;
see \autoref{exa:ShKirchberg}.
Further, if $X$ is a connected, compact, metrizable space and $x\in X$, then $C_0(X\setminus\{x\})\otimes A\otimes\KK$ has a strong shape system whenever $K_0(C_0(X\setminus\{x\})\otimes A)$ is torsion-free;
see \autoref{exa:sss_HS_Kirchberg}.

This paper proceeds as follows:
In \autoref{sec:shape}, we recall the basic notions of noncommutative shape theory.
In \autoref{sec:mappingCyl}, we generalize results of Loring and Shulman about cones of \ca{s} \cite[Section~7]{LorShu12NCSemialgLifting} to the setting of mapping cylinders.
We show that the mapping cylinder has a strong shape system if the domain of the defining morphism is semiprojective;
see \autoref{prp:mapping_cylinder_limit_SP}.

In \autoref{sec:sss}, we study closure properties of the class of \ca{s} that have a strong shape system.
We use the technical result about strong shape systems for mapping cylinders (\autoref{prp:mapping_cylinder_limit_SP}) to prove the main result \autoref{prp:sss_shDomination}.
In \autoref{sec:nuclearSSS}, we derive results about strong shape systems for nuclear \ca{s}.

\section*{Acknowledgements}

I am grateful to Dominic Enders for valuable feedback on a draft of this paper.

\section{Shape theory}
\label{sec:shape}

In this section, we recall the basic notions of shape theory for separable \ca{s} as developed by Blackadar in \cite{Bla85ShapeThy}.

We use the symbol $\simeq$ to denote homotopy equivalence.
By $A,B,C,D$ we usually denote \ca{s}.
A morphism between \ca{s} is understood to be a \stHom.
By ideals in a \ca{} we always mean closed, two-sided ideals.

A morphism $\varphi\colon A\to B$ is said to be \emph{projective} if for every \ca{} $C$, every ideal $J\lhd C$, and every morphism $\sigma\colon B\to C/J$, there exists a morphism $\psi\colon A\to C$ such that $\pi\circ\psi=\sigma\circ\varphi$, where $\pi\colon C\to C/J$ is the quotient morphism.
This is indicated in the commutative diagram on the left below.

A morphism $\varphi\colon A\to B$ is said to be \emph{semiprojective} if for every \ca{} $C$, every increasing sequence $J_0\subseteq J_1\subseteq\ldots$ of ideals in $C$, and for every morphism $\sigma\colon B\to C/\overline{\bigcup_nJ_n}$, there exist some $n\in\NN$ and a morphism $\psi\colon A\to C/J_n$ such that $\pi_{\infty,n}\circ\psi=\sigma\circ\varphi$, where $\pi_{\infty,n}\colon C/J_n\to C/\overline{\bigcup_nJ_n}$ is the quotient morphism.
This is indicated in the commutative diagram on the right below.

\begin{figure}[h]
\centering
\begin{minipage}{.5\textwidth}
\centering
\makebox{
\xymatrix{
& & C \ar[d]^{\pi} \\
A \ar[r]_{\varphi} \ar@{..>}[urr]^{\psi} & B \ar[r]_-{\sigma}& C/J
}}
\end{minipage}%
\begin{minipage}{.5\textwidth}
\centering
\makebox{
\xymatrix@R-5pt{
& & C \ar[d] \\
& & C/J_n \ar[d]^{\pi_{\infty,n}} \\
A \ar[r]_{\varphi} \ar@{..>}[urr]^{\psi} & B \ar[r]_-{\sigma}  & C/\overline{\bigcup_n J_n}
}}
\end{minipage}
\end{figure}
A \ca{} $A$ is \emph{(semi)projective} if the identity map $\id_A\colon A\to A$ is.

By a \emph{sequential inductive system} we mean a sequence $A_0,A_1,A_2,\ldots$ of \ca{s} together with morphisms $\gamma_{n+1,n}\colon A_n\to A_{n+1}$ for each $n\in\NN$.
Given $n,m\in\NN$ with $n<m$, we set $\gamma_{m,n}:=\gamma_{m,m-1}\circ\ldots\circ\gamma_{n+2,n+1}\circ\gamma_{n+1,n}\colon A_n\to A_m$.
We call $\gamma_{m,n}$ the connecting morphisms of the system.
We use $\varinjlim_n A_n$ to denote the inductive limit.
It comes with natural morphisms $\gamma_{\infty,n}\colon A_n\to\varinjlim_n A_n$ for each $n\in\NN$.

Let $A$ be a separable \ca.
A \emph{shape system} for $A$ is a sequential inductive system $(A_n,\gamma_{n+1,n})$ of separable \ca{s} such that $A\cong\varinjlim_n A_n$ and such that the connecting morphisms $\gamma_{n+1,n}$ are semiprojective.
By \cite[Theorem~4.3]{Bla85ShapeThy}, every separable \ca{} has a shape system..

Let $\mathcal{A}=(A_n,\gamma_{n+1,n})$ and $\mathcal{B}=(B_k,\theta_{k+1,k})$ be inductive systems.
Then $\mathcal{A}$ is said to be \emph{shape dominated} by $\mathcal{B}$, denoted $\mathcal{A}\precsim\mathcal{B}$, if there exist two strictly increasing sequences $n_0<n_1<\ldots$ and $k_0<k_1<\ldots$ in $\NN$, and morphisms $\alpha_i\colon A_{n_i}\to B_{k_i}$ and $\beta_i\colon B_{k_i}\to A_{n_{i+1}}$ such that $\beta_i\circ\alpha_i\simeq\gamma_{n_{i+1},n_i}$ for $i\in\NN$.
If also $\alpha_{i+1}\circ\beta_i\simeq\theta_{k_{i+1},k_i}$ for all $i\in\NN$, then $\mathcal{A}$ and $\mathcal{B}$ are said to be \emph{shape equivalent}, denoted $\mathcal{A}\sim\mathcal{B}$.
The situation is indicated in the following diagram:
\[
\xymatrix{
A_{n_0} \ar[rr]^{\gamma_{n_1,n_0}} \ar[dr]_{\alpha_0}
& & A_{n_1} \ar[rr]^{\gamma_{n_2,n_1}} \ar[dr]_{\alpha_1}
& & A_{n_2} \ar[dr]_{\alpha_2} & \ldots
\\
& B_{k_0} \ar[rr]_{\theta_{k_1,k_0}} \ar[ur]_{\beta_0}
& & B_{k_1} \ar[rr]_{\theta_{n_2,n_1}} \ar[ur]_{\beta_1}
& & B_{k_2} & \ldots \ .
}
\]

The relation $\precsim$ for sequential inductive systems is transitive, and $\sim$ is an equivalence relation;
see \cite[Definition~4.6]{Bla85ShapeThy}.

By \cite[Corollary~4.9]{Bla85ShapeThy}, any two shape systems of a \ca{} are shape equivalent.
Therefore, the following definition makes sense:
A \ca{} $A$ is said to be \emph{shape dominated} by a \ca{} $B$, denoted $A\precsim_{\Sh}B$, if we have $\mathcal{A}\precsim\mathcal{B}$ for some, or equivalently every, shape system $\mathcal{A}$ for $A$ and $\mathcal{B}$ for $B$.
Similarly, $A$ is said to be \emph{shape equivalent} to $B$, denoted $A\sim_{\Sh}B$, if we have $\mathcal{A}\sim\mathcal{B}$ for shape systems $\mathcal{A}$ for $A$ and $\mathcal{B}$ for $B$.

Recall that $A$ is \emph{homotopy dominated} by $B$ if there exist morphisms $\alpha\colon A\to B$ and $\beta\colon B\to A$ with $\beta\circ\alpha\simeq\id_A$.
If also $\alpha\circ\beta\simeq\id_B$, then $A$ and $B$ are \emph{homotopy equivalent}.
By \cite[Corollary~4.11]{Bla85ShapeThy}, shape is coarser than homotopy:
If $A$ is homotopy dominated by (homotopy equivalent to) $B$, then $A\precsim_{\Sh} B$ ($A\sim_{\Sh}B$).

\section{Mapping cylinders}
\label{sec:mappingCyl}

In this section, we consider the mapping cylinder $Z_\varphi$ associated to a morphism $\varphi\colon A\to B$;
see \autoref{pgr:mappinCyl}.
We let $\varphi^+\colon A\to B^+$ denote the composition of $\varphi$ with the inclusion of $B$ into its forced unitization $B^+$.
Given a presentation of $A$ and $B$ with self-adjoint generators and relations, we show how to present $Z_{\varphi^+}$;
see \autoref{prp:presentation_mapping_cylinder}.
Let us assume that $A$ is semiprojective.
If we relax certain of the relations defining $Z_{\varphi^+}$, we obtain a semiprojective \ca{};
see \autoref{prp:softened_mapping_cylinder_SP}.
We deduce that $Z_{\varphi^+}$ is an inductive limit of semiprojective \ca{s};
see the proof of \autoref{prp:mapping_cylinder_limit_SP}.

To obtain the same conclusion for $Z_\varphi$, we use that an ideal $J$ in a semiprojective \ca{} $B$ is semiprojective if the quotient $B/J$ is projective;
see \cite[Corollary~3.1.3]{End16CharSemiprojSubhom}.
We deduce the main result of this section:
The mapping cylinder $Z_{\varphi}$ has a strong shape system;
see \autoref{prp:mapping_cylinder_limit_SP}.

\begin{pgr}
\label{pgr:presentation_ca}
The theory of universal \ca{s} given by generators and relations is very rich and closely connected to noncommutative shape theory.
Blackadar developed the basic theory in \cite[Section~1]{Bla85ShapeThy}.
A comprehensive study of \ca{} relations was presented by Loring \cite{Lor10CaRelations}.
We only recall the construction of a universal \ca{} given by self-adjoint generators subject to polynomial, order and norm relations.

Let $\vect{x}=(x_0,x_1,x_2,\ldots)$ be a sequence of (self-adjoint) generators.
We let $F(\vect{x})$ denote the free $\ast$-algebra on the self-adjoint generators $\vect{x}$.
Given a Hilbert space $H$, there is a natural bijective correspondence between sequences $\vect{X}=(X_k)_{k\in\NN}$ of self-adjoint elements in $\Bdd(H)$ and \stHom{s} $F(\vect{x})\to\Bdd(H)$.

Let $\vect{n}=(c_k)_{k\in\NN}$ be a sequence of numbers in $[0,\infty)$.
We say that a sequence $\vect{X}$ of self-adjoint elements in $\Bdd(H)$ satisfies the norm relations specified by $\vect{n}$ if $\|X_k\|\leq c_k$ for every $k\in\NN$.

A \emph{NC polynomial} $p$ in $\vect{x}$ is a polynomial in finitely many noncommuting variable from $\vect{x}$ with coefficients in $\CC$.
We always assume that NC polynomials have vanishing constant term.
An example of a NC polynomial is $x_1x_2^5x_1-3x_2x_3$.
Let $\vect{p}=(p_l)_{l\in\NN}$ be a sequence of NC polynomials in $\vect{x}$.
We say that a sequence $\vect{X}$ of self-adjoint elements in $\Bdd(H)$ satisfies the polynomial relations specified by $\vect{p}$ if $p_l(\vect{X})=0$ for every $l\in\NN$.

We also consider order-relations of the form $\alpha x_k\leq \beta x_l$, for some $\alpha,\beta\in\RR$ and $k,l\in\NN$.
We formalize this by letting $\vect{o}=((\alpha_m,\beta_m,s_{m},t_{m}))_{m\in\NN}$ be a sequence of tuples with $\alpha_m,\beta_m\in\RR$ and $s_{m},t_{m}\in\NN$.
We say that a sequence $\vect{X}$ of self-adjoint elements in $\Bdd(H)$ satisfies the order relations specified by $\vect{o}$ if $\alpha_m X_{s_{m}}\leq \beta_m X_{t_{m}}$ for every $m\in\NN$.

A \emph{representation} of $\langle\vect{x}|\vect{n},\vect{p},\vect{o}\rangle$ on a Hilbert space $H$ is a sequence $\vect{X}$ of self-adjoint operators in $\Bdd(H)$ satisfying the norm relations~$\vect{n}$, the polynomial relations~$\vect{p}$ and the order relations~$\vect{o}$.
Abusing notation, we identify a representation $\vect{X}$ of $\langle\vect{x}|\vect{n},\vect{p},\vect{o}\rangle$ on $H$ with the \stHom{} $\varphi\colon F(\vect{x})\to\Bdd(H)$ satisfying $\varphi(\vect{x})=\vect{X}$.
We obtain a universal $C^*$-seminorm on $F(\vect{x})$ given by
\[
\|z\| := \sup \big\{ \|\varphi(z)\| :
\varphi\colon F(\vect{x})\to\Bdd(H) \text{ representation of } \langle\vect{x}|\vect{n},\vect{p},\vect{o}\rangle \big\},
\]
for $z\in F(\vect{x})$.
The completion of $F(\vect{x})$ with respect to this $C^*$-seminorm is called the \emph{universal \ca{}} given by self-adjoint generators $\vect{x}$, subject to the relations specified by $\vect{n}$, $\vect{p}$, and $\vect{o}$.
Following \cite{LorShu12NCSemialgLifting} we denote it by
\[
\CAlgebraBig{
\vect{x}=(x_0,x_1,\ldots)
}{
-c_k\leq x_k \leq c_k, \quad k\in\NN \\
p_l(\vect{x})=0, \quad l\in\NN \\
\alpha_m x_{s_m}\leq \beta_m x_{t_m}, \quad m\in\NN
},
\]
or just $\CAlgebra{\vect{x}}{\vect{n},\vect{p},\vect{o}}$.

It was observed by Blackadar \cite[Example~1.3(b)]{Bla85ShapeThy} that every separable \ca{} has a presentation with countably many generators and corresponding norm relations and countably many polynomial relations.
Loring and Shulman \cite[Lemma~7.3]{LorShu12NCSemialgLifting} modified the construction of Blackadar to show that every separable \ca{} has such a presentation with \emph{self-adjoint} generators.
That is, given a separable \ca{} $A$, there is a sequence $\vect{x}=(x_k)_k$ of self-adjoint generators, a sequence $(c_k)_k$ of positive real numbers, and a sequence of NC polynomials $(p_l)_l$ in $\vect{x}$ such that
\begin{align*}
A\cong\CAlgebraBig{
\vect{x}=(x_0,x_1,\ldots)
}{
-c_k\leq x_k\leq c_k, \quad k\in\NN \\
p_l(\vect{x})=0, \quad l\in\NN
}.
\end{align*}

Assume that $A=\CAlgebra{\vect{x}}{\vect{n},\vect{p}}$, for a sequence of self-adjoint generators $\vect{x}$, and for some norm conditions $\vect{n}$ and NC polynomials~$\vect{p}$.
We set
\[
\CAlgebraUnital{\vect{x}}{\vect{n},\vect{p}}
:= \CAlgebraBig{
e,x_0,x_1,\ldots
}{
-c_k\leq x_k \leq c_k, \quad k\in\NN \\
p_l(\vect{x})=0, \quad l\in\NN \\
-1\leq e\leq 1, e^2=e \\
x_ke=x_k=ex_k,\quad k\in\NN
}.
\]
Then $\CAlgebraUnital{\vect{x}}{\vect{n},\vect{p}}$ is isomorphic to $A^+$, the forced unitization of $A$;
see \cite[II.1.2.1, p.53]{Bla06OpAlgs}.
For example, $\CC^+\cong\CC\oplus\CC$.
\end{pgr}

The following result is a slight generalization of \cite[Lemma~7.3]{LorShu12NCSemialgLifting} that will be used in \autoref{prp:presentation_mapping_cylinder}.

\begin{lma}
\label{prp:presentation-extending}
Let $B$ be a separable \ca{}, let $\bar{\vect{x}}=(\bar{x}_k)_k$ be a sequence in $B$, let $(c_k)_k$ be a sequence of positive real numbers, and let $(p_k)_k$ be a sequence of NC polynomials such that $-c_k\leq \bar{x}_k\leq c_k$ and $p_k(\bar{\vect{x}})=0$ for each $k$.

Then there exists a sequence $(d_j)_j$ of positive real numbers, and a sequence $(q_l)_l$ of NC polynomials in two sequences $\vect{x}$ and $\vect{y}$ of self-adjoint generators such that each $q_l$ contains at least one term from $\vect{y}$ (that is, $q_l$ is not just a NC polynomial in $\vect{x}$), and such that
\[
B \cong \CAlgebraBig{
\vect{x}=(x_0,x_1,\ldots) \\
\vect{y}=(y_0,y_1,\ldots)
}{
(R):\ -c_k\leq x_k\leq c_k, \ p_k(\vect{x})=0, \quad k\in\NN \\
(S_{\text{norm}}):\ -d_j\leq y_j\leq d_j, \quad j\in\NN \\
(S_{\text{pol}}):\ q_l(\vect{x},\vect{y})=0, \quad l\in\NN
}
\]
via an isomorphism that identifies the generator $x_k$ with the element $\bar{x}_k\in B$.
\end{lma}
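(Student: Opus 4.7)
The plan is to augment the given partial data by a sequence $\vect{y}$ of self-adjoint generators that, together with $\bar{\vect{x}}$, generate $B$, and then to impose all polynomial identities involving at least one $\vect{y}$-variable that hold in $B$. This reduces the statement to \cite[Lemma~7.3]{LorShu12NCSemialgLifting}; the only real difficulty is that the cited lemma would in general also require polynomial relations purely in $\vect{x}$, which we are forbidden to list.

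First, fix a countable sequence $(b_j)_j\subset B_{\mathrm{s.a.}}$ such that $\{\bar{x}_k\}_k\cup\{b_j\}_j$ is dense in $B_{\mathrm{s.a.}}$. Since $(p_k)_k$ need not capture every NC polynomial identity satisfied by $\bar{\vect{x}}$ in $B$, we encode the missing ones via zero-valued auxiliary generators. For every NC polynomial $p$ in $\vect{x}$ with rational complex coefficients and vanishing constant term satisfying $p(\bar{\vect{x}})=0$ in $B$ but not appearing among $(p_k)$, introduce an auxiliary self-adjoint generator $y_p$ with target value $\bar{y}_p:=0$ and norm bound $d_p:=1$, together with the two polynomial relations $y_p=0$ and $y_p-p(\vect{x})^* p(\vect{x})=0$. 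Both contain a $y$-term; jointly they force $p(\vect{X})^* p(\vect{X})=0$, hence $p(\vect{X})=0$, in any representation.

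Concatenating $(b_j)_j$ with the $(y_p)_p$ yields the desired sequence $\vect{y}$ with norm bounds $(d_j)_j$. Let $(q_l)_l$ consist of the encoding relations above together with all NC polynomials $q(\vect{x},\vect{y})$ (with rational complex coefficients, vanishing constant term, and at least one $y$-term) satisfying $q(\bar{\vect{x}},\bar{\vect{y}})=0$ in $B$. Write $C$ for the resulting universal \ca{}. The evaluation $x_k\mapsto\bar{x}_k$, $y_j\mapsto\bar{y}_j$ extends to a $\ast$-homomorphism $\pi\colon C\to B$ whose image contains the dense $\ast$-subalgebra generated by $\bar{\vect{x}}\cup\bar{\vect{y}}$, hence is surjective.

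For injectivity, the encoding step ensures that every NC polynomial identity in $\vect{x}$ alone satisfied by $\bar{\vect{x}}$ in $B$ is a consequence of the relations defining $C$. Consequently, $C$ coincides with the universal \ca{} obtained by additionally imposing \emph{all} polynomial identities (including purely $\vect{x}$-valued ones) satisfied by $(\bar{\vect{x}},\bar{\vect{y}})$ in $B$; by the self-adjoint version of Blackadar's presentation theorem \cite[Lemma~7.3]{LorShu12NCSemialgLifting}, applied to the dense sequence $\bar{\vect{x}}\cup\bar{\vect{y}}$, this latter universal \ca{} is isomorphic to $B$. The main obstacle is precisely the ban on $y$-free $q_l$; the auxiliary zero generators sidestep this by converting every such offending relation into an admissible one.
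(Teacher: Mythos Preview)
Your appeal to \cite[Lemma~7.3]{LorShu12NCSemialgLifting} for the ``dense sequence $\bar{\vect{x}}\cup\bar{\vect{y}}$'' is the gap. That lemma does not assert that imposing all polynomial identities together with generator-wise norm bounds on an arbitrary dense self-adjoint sequence presents $B$. Its proof takes the generators to enumerate the self-adjoint part of a countable dense $\FF$-$\ast$-subalgebra $B_0$: then for every self-adjoint $\FF$-polynomial $p$ the element $p(\bar{\vect{y}})$ is literally one of the generators $\bar{y}_j$, so the relation $p(\vect{y})-y_j=0$ combined with the bound $\|y_j\|\leq\|\bar{y}_j\|$ forces $\|p(\vect{y})\|\leq\|p(\bar{\vect{y}})\|_B$ in the universal algebra, and hence the canonical map to $B$ is isometric. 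With your $(b_j)_j$ chosen merely dense in $B_{\mathrm{s.a.}}$, there is no mechanism for the norm bounds on individual generators to propagate to arbitrary polynomials, and injectivity of $\pi\colon C\to B$ is not established. (Your norm bounds are also not tight: $c_k$ may exceed $\|\bar{x}_k\|$, and you set $d_p=1$ for generators mapping to $0$.)

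Once this is repaired, the auxiliary zero-generator trick becomes superfluous, and the paper's route is both shorter and what remains of yours. The paper takes $\bar{\vect{y}}$ to enumerate the self-adjoint part of a dense $\FF$-$\ast$-subalgebra $B_0$ that \emph{contains} each $\bar{x}_k$; then $\bar{x}_k=\bar{y}_{\alpha(k)}$ for some $\alpha(k)$, and one simply adjoins the relations $x_k-y_{\alpha(k)}=0$ to the Loring--Shulman relations for $\bar{\vect{y}}$. Each such relation carries a $y$-term, it makes every $x_k$ redundant, and therefore any identity purely in $\vect{x}$ is automatically a consequence of the $\vect{y}$-relations --- no encoding via extra generators is needed. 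A brief intertwining between the two universal algebras then gives the isomorphism with $B$. Observe that if you corrected your choice of $(b_j)_j$ in this way, your own list ``all rational $q$ with a $y$-term and $q(\bar{\vect{x}},\bar{\vect{y}})=0$'' would already contain $x_k-y_{\alpha(k)}$, so the $y_p$ step would do nothing.
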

\begin{proof}
We adapt the proof of \cite[Lemma~7.3]{LorShu12NCSemialgLifting}.
Set $\FF:=\QQ+i\QQ$ (a countable, dense subfield of $\CC$).
Choose a countable, dense $\FF$-$\ast$-subalgebra $B_0$ of $B$ that contains $\bar{\vect{x}}$.
Let $\bar{\vect{y}}=(\bar{y}_0,\bar{y}_1,\ldots)$ be an enumeration of the self-adjoint elements in $B_0$.
Note that each $\bar{x}_k$ appears in the sequence $\bar{\vect{y}}$, which allows us to fix a map $\alpha\colon\NN\to\NN$ such that $\bar{x}_k=\bar{y}_{\alpha(k)}$ for each $k$.
For each $j\in\NN$, set $d_j:=\|\bar{y}_j\|$.

As shown in the proof of \cite[Lemma~7.3]{LorShu12NCSemialgLifting}, there exists a countable collection $Q$ of NC polynomials such that the universal \ca{}
\[
U_1:=\CAlgebraBig{
	\vect{y}
}{
	(S_{\text{norm}}):\ -d_j\leq y_j\leq d_j, \quad j\in\NN \\
	(S_{\text{pol}}):\ q(\vect{y})=0, \quad q\in Q
}
\]
is isomorphic to $B$ via an isomorphism $\varphi\colon U_1\to B$ that maps the generator $y_k$ to the element $\bar{y}_k\in B$.

Let $(q_l)_l$ be an enumeration of the polynomials in $Q$ (considered as polynomials in $\vect{x}$ and $\vect{y}$ where no variable from $\vect{x}$ occurs) and the polynomials $x_k-y_{\alpha(k)}$, for $k\in\NN$.
Note that each $q_l$ contains at least on term from $\vect{y}$.
Set
\[
U_2:=
\CAlgebraBig{
	\vect{x}, \vect{y}
}{
(R):\ -c_k\leq x_k\leq c_k, \ p_k(\vect{x})=0, \quad k\in\NN \\
(S_{\text{norm}}):\ -d_j\leq y_j\leq d_j, \quad j\in\NN \\
(S_{\text{pol}}):\ q_l(\vect{x},\vect{y})=0, \quad l\in\NN
}.
\]
The elements $\bar{\vect{x}}$ and $\bar{\vect{y}}$ in $B$ satisfy the relations $(R)$, $(S_{\text{norm}})$ and $(S_{\text{pol}})$.
Hence, there is a (unique) \stHom{} $\psi\colon U_2\to B$ that sends $x_k$ to $\bar{x}_k$, and $y_j$ to $\bar{y}_j$.
Further, there is a \stHom{} $\beta\colon U_1\to U_2$ satisfying $\beta(y_j)=y_j$ for each $j$.
Set $\alpha:=\varphi^{-1}\circ\psi\colon U_2\to U_1$.

Note that $(\beta\circ\alpha)(y_j)=\beta(\varphi^{-1}(\bar{y}_j))=\beta(y_j)=y_j$ for each $j$, which implies that $\beta\circ\alpha$ is the identity on $U_1$.
The image of $\beta$ contains $\vect{y}$.
Using that $(S_{\text{pol}})$ contains the polynomial $x_k-y_{\alpha(k)}$ for each $k$, we deduce that the image of $\beta$ also contains $\vect{x}$, whence $\beta$ is surjective.
Together with $\beta\circ\alpha=\id_{U_1}$ this implies that $\alpha$ and $\beta$ are isomorphisms.
Hence, $\psi=\varphi\circ\beta^{-1}\colon U_2\to B$ is an isomorphism and it satisfies $\psi(x_k)=\bar{x}_k$ for each $k$, as desired.
\end{proof}

\begin{pgr}
\label{pgr:mappinCyl}
Given \ca{s} $A$ and $B$ and a morphism $\varphi\colon A\to B$, recall that the \emph{mapping cylinder} of $\varphi$, denoted $Z_\varphi$, is the pullback of $A$ and $C([0,1],B)$ along the maps $\varphi$ and $\ev_0$ (evaluation at $0$).
We have
\[
Z_\varphi = \big\{ (a,f) \in A \oplus C([0,1],B) : \varphi(a)=f(0) \big\}.
\]
Further, $Z_\varphi$ fits into the following commutative diagram:
\[
\xymatrix{
Z_\varphi \ar[r] \ar[d] & C([0,1],B) \ar[d]^{\ev_0} \\
A \ar[r]_{\varphi} & B\ .
}
\]
\end{pgr}

\begin{lma}
\label{prp:presentation_mapping_cylinder}
Let $A$ and $B$ be separable \ca{s}, and let $\varphi\colon A\to B$ be a morphism.
Assume that $A$ is given via self-adjoint generators and relations as
\[
A =\CAlgebra{\vect{x}=(x_0,x_1,\ldots)}{(R):\ -c_k\leq x_k\leq c_k,\ q_k(\vect{x})=0, \quad k\in\NN }.
\]
Set $\bar{x}_k:=\varphi(x_k)$ for each $k\in\NN$.

Apply \autoref{prp:presentation-extending} to obtain a sequence $(d_j)_j$ of positive real numbers, a sequence $(q_l)_l$ of NC polynomials in two sequences $\vect{x}$ and $\vect{y}$ of self-adjoint generators such that each $q_l$ contains at least one term from $\vect{y}$ (that is, $q_l$ is not just a NC polynomial in $\vect{x}$), and such that
\[
B \cong \CAlgebraBig{
\vect{x}=(x_0,x_1,\ldots) \\
\vect{y}=(y_0,y_1,\ldots)
}{
(R):\ -c_k\leq x_k\leq c_k, \ q_k(\vect{x})=0, \quad k\in\NN \\
(S_{\text{norm}}):\ -d_j\leq y_j\leq d_j, \quad j\in\NN \\
(S_{\text{pol}}):\ q_l(\vect{x},\vect{y})=0, \quad l\in\NN
}.
\]
via an isomorphism that identifies the generator $x_k$ with the element $\bar{x}_k\in B$.

Let $\varphi^+\colon A\to B^+$ denote the composition of $\varphi$ with the inclusion $B\subseteq B^+$.
Then the mapping cylinder $Z_{\varphi^+}$ has a presentation as
\begin{align*}
Z_{\varphi^+}
\cong\CAlgebraBig
{
\vect{x}, \vect{y}, h
}{
(R):\ -c_k\leq x_k\leq c_k,\ q_k(\vect{x})=0, \quad k\in\NN \\
(\widetilde{S}_{\text{norm}}):\ -d_jh\leq y_j\leq d_jh, \quad j\in\NN \\
(\widetilde{S}_{\text{pol}}):\ \widetilde{q}_l(\vect{x},\vect{y},h)=0, \quad l\in\NN \\
(C):\ 0\leq h\leq 1,\ hx_k=x_kh,\ hy_k=y_kh, \quad k\in\NN
},
\end{align*}
where for each $l\in\NN$ the polynomial $\widetilde{q}_l(\vect{x},\vect{y},h)$ is obtained from $q_l$ by `homogenizing' on the left with $h$ in the $y$-variables, that is, if $q_l(\vect{x},\vect{y})=\sum_{d=0}^{N_l}q_{l,d}(\vect{x},\vect{y})$ with $N_l\geq 1$ and where each polynomial $q_{l,d}$ is $d$-homogeneous in $\vect{y}$, then $\widetilde{q}_l(\vect{x},\vect{y},h):=\sum_{d=0}^{N_l}h^{N_l-d}q_{l,d}(\vect{x},\vect{y})$.
\end{lma}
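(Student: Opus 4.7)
The plan is to construct a candidate surjection $\pi\colon U\to Z_{\varphi^+}$ from the proposed universal algebra $U$, verify surjectivity directly, and then establish injectivity by exploiting that $h$ is central in $U$ to decompose representations of $U$ into fibers over $[0,1]$.

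First I define elements of $Z_{\varphi^+}$ by
\[
X_k := (x_k,\, \bar{x}_k\otimes 1),\quad Y_j := (0,\, \bar{y}_j\otimes t),\quad H := (0,\, 1_{B^+}\otimes t),
\]
where $t$ denotes the identity function on $[0,1]$. The pullback conditions $\varphi^+(x_k)=\bar{x}_k$ and $\varphi^+(0)=0$ ensure they lie in $Z_{\varphi^+}$. The relations $(R)$ are inherited from the presentations of $A$ and $B$, while $(C)$ is immediate since the second coordinate of $H$ is scalar-valued. The norm condition $(\widetilde{S}_{\text{norm}})$ reduces pointwise in $t\in[0,1]$ to $-td_j\leq t\bar{y}_j\leq td_j$. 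The key check is $(\widetilde{S}_{\text{pol}})$: using $d$-homogeneity of $q_{l,d}$ in $\vect{y}$, the second coordinate of $\widetilde{q}_l(\vect{X},\vect{Y},H)$ at time $t$ equals
\[
\sum_{d=0}^{N_l} t^{N_l-d}\, q_{l,d}(\bar{\vect{x}}, t\bar{\vect{y}}) = t^{N_l}\sum_{d=0}^{N_l} q_{l,d}(\bar{\vect{x}},\bar{\vect{y}}) = t^{N_l}\, q_l(\bar{\vect{x}},\bar{\vect{y}}) = 0,
\]
while the first coordinate vanishes because $H$ and every $Y_j$ have zero first coordinate and $N_l\geq 1$ (ensured by the assumption that each $q_l$ contains a $\vect{y}$-term), so every monomial of $\widetilde{q}_l(\vect{X},\vect{Y},H)$ has zero first coordinate. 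The universal property of $U$ then produces $\pi$.

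For surjectivity, continuous functional calculus on $H$ puts $(0,\, f(t)\cdot 1_{B^+})$ in the image for every $f\in C_0((0,1])$. Multiplying by $X_k$ and $Y_j$ and taking closure, using that $\bar{\vect{x}}$, $\bar{\vect{y}}$ together with $1_{B^+}$ generate $B^+$, one obtains all of $\{0\}\oplus C_0((0,1],B^+)$. The $X_k$ and closures in them also yield every element $(a, \varphi^+(a)\otimes 1)$ for $a\in A$. A general $(a,f)\in Z_{\varphi^+}$ decomposes as $(a,\varphi^+(a)\otimes 1) + (0,\, f-\varphi^+(a)\otimes 1)$, the second summand lying in $C_0((0,1],B^+)$.

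Injectivity is the main obstacle. My plan is to introduce auxiliary $*$-homomorphisms $\rho_A\colon U\to A$ (sending $\vect{y}\mapsto 0$, $h\mapsto 0$) and $\rho_C\colon U\to C([0,1],B^+)$ (sending $x_k\mapsto \bar{x}_k\otimes 1$, $y_j\mapsto \bar{y}_j\otimes t$, $h\mapsto 1\otimes t$), which are well defined by the same computations used for $\pi$, and to show $\rho_A\oplus\rho_C$ is isometric. For the latter, take any representation $\sigma\colon U\to\Bdd(\mathcal{H})$; the element $\sigma(h)$ is central in $\sigma(U)$ and positive with spectrum in $[0,1]$. The spectral decomposition of $\sigma(h)$ splits $\sigma$ into fibers parametrised by $t\in[0,1]$: on the $t=0$ fiber, $(\widetilde{S}_{\text{norm}})$ forces every $\sigma(y_j)$ to vanish and the fiber factors through $\rho_A$; on a fiber where $\sigma(h)$ acts as a positive scalar $t>0$, the substitution $y_j = t\tilde{y}_j$ reduces $\widetilde{q}_l$ to $t^{N_l}q_l(\vect{x},\tilde{\vect{y}})$, so the fiber factors through $\ev_t\circ\rho_C$. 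This yields $\|\sigma(u)\|\leq \max(\|\rho_A(u)\|,\|\rho_C(u)\|_\infty)$ for every $u\in U$, so $\rho_A\oplus\rho_C$ is isometric. Since $\pi$ factors $\rho_A\oplus\rho_C$ through $Z_{\varphi^+}$, injectivity of $\pi$ follows. The delicate point is setting up the spectral decomposition of a $C^*$-representation along a central element without passing to von Neumann algebras; this is standard but requires care.
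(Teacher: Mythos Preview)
Your construction of the map $\pi\colon U\to Z_{\varphi^+}$ and the surjectivity argument match the paper's proof essentially verbatim: same candidate elements, same verification of the relations (including the key computation $\sum_d t^{N_l-d}q_{l,d}(\bar{\vect{x}},t\bar{\vect{y}})=t^{N_l}q_l(\bar{\vect{x}},\bar{\vect{y}})=0$ using $N_l\geq 1$), and the same decomposition $(a,f)=(a,\varphi^+(a))+(0,f-\varphi^+(a))$ for surjectivity.

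The only difference is in the injectivity step, and here the paper resolves exactly the ``delicate point'' you flag. Rather than disintegrating an arbitrary representation $\sigma$ of $U$ along the spectrum of the central element $\sigma(h)$ (which, as you note, either needs a direct-integral argument or a passage to the enveloping von~Neumann algebra), the paper simply restricts attention to \emph{irreducible} representations: given $z\in U$ with $z\neq 0$, pick an irreducible $\sigma$ with $\sigma(z)\neq 0$. Since $h$ is central and $\sigma$ is irreducible, $\sigma(h)=\lambda\cdot 1$ for some $\lambda\in[0,1]$, and one is immediately in one of your two fibers. The case $\lambda=0$ then factors through $\rho_A$ and the case $\lambda>0$ through $\ev_\lambda\circ\rho_C$ exactly as you describe, yielding $\pi(z)\neq 0$. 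So your strategy is correct, but the paper's use of irreducibility replaces the spectral-decomposition machinery with a one-line appeal to Schur's lemma; you may want to adopt that simplification.
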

\begin{proof}
The proof goes along the lines of \cite[Lemma~7.1]{LorShu12NCSemialgLifting}.
Let $U$ be the universal \ca{} that we want to show is isomorphic to $Z_{\varphi^+}$.
Recall that
\[
Z_{\varphi^+} = \big\{ (a,f) \in A\oplus C([0,1],B^+) : \varphi(a)=f(0) \big\}.
\]

To clarify the notation, we write $a\oplus f$ for an element $(a,f)\in Z_{\varphi^+}$.
For $k\in\NN$, we let $\varphi(x_k)\in C([0,1],B^+)$ denote the constant function with value $\varphi(x_k)$, and we let $t y_k \in C([0,1],B^+)$ denote the function $[0,1]\to B^+$ given by $t\mapsto t y_k$.
Define elements of $Z_{\varphi^+}$ as
\[
\tilde{x}_k := x_k \oplus \varphi(x_k), \quad
\tilde{y}_k := 0 \oplus t y_k, \quad
\tilde{h} := 0 \oplus t 1_{B^+}.
\]
Define a map $\omega\colon U\to Z_{\varphi^+}$ on the generators of $U$ by
\begin{align*}
x_k \mapsto \tilde{x}_k, \quad
y_j \mapsto \tilde{y}_j, \quad
h \mapsto \tilde{h}.
\end{align*}
To show that this assignment defines a morphism, we need to verify that $\tilde{\vect{x}}:=(\tilde{x}_0,\tilde{x}_1,\ldots)$, $\tilde{\vect{y}}:=(\tilde{y}_0,\tilde{y}_1,\ldots)$, and $\tilde{h}$ satisfy the relations defining $U$.

The relations $(R)$ and $(C)$ are clearly satisfied.
To verify $(\widetilde{S}_{\text{norm}})$, let $j\in\NN$.
Since $-d_j\leq y_j\leq d_j$ holds in $B$, we deduce that $t(-d_j)\leq ty_j\leq td_j$ for every $t\in[0,1]$, and hence
\begin{align*}
-d_j\tilde{h}
= 0\oplus t(-d_j)
\leq 0\oplus ty_j
=\tilde{y}_j
\leq 0\oplus td_j
=d_j\tilde{h}.
\end{align*}

To verify $(\widetilde{S}_{\text{pol}})$, let $l\in\NN$.
We decompose $q_l$ as $q_l(\vect{x},\vect{y})=\sum_{d=0}^{N_l}q_{l,d}(\vect{x},\vect{y})$ with $N_l\geq 1$ and where each polynomial $q_{l,d}$ is $d$-homogeneous in $\vect{y}$.
Then
\[
\widetilde{q}_l(\vect{x},\vect{y},h)
= \sum_{d=0}^{N_l}h^{N_l-d}q_{l,d}(\vect{x},\vect{y}).
\]
(For example, for $q=y_0+y_1x_1-x_0$ we obtain $\widetilde{q}=y_0+y_1x_1-hx_0$, and for $q=y_1^2+y_0-x_1^3$ we obtain $\widetilde{q}=y_1^2+hy_0-h^2x_1^3$.)
Using at the fourth step that $q_{l,d}(\varphi(\vect{x}),t\vect{y})=t^d q_{l,d}(\varphi(\vect{x}),\vect{y})$, and using at the last step that $q_l(\varphi(\vect{x}),\vect{y})=q_l(\bar{\vect{x}},\vect{y})=0$, we deduce that
\begin{align*}
\widetilde{q}_l(\tilde{\vect{x}},\tilde{\vect{y}},\tilde{h})
&= \sum_{d=0}^{N_l}\tilde{h}^{N_l-d} q_{l,d}(\tilde{\vect{x}},\tilde{\vect{y}}) \\
&= \sum_{d=0}^{N_l}(0\oplus t)^{N_l-d}q_{l,d}(\vect{x}\oplus\varphi(\vect{x}),0\oplus t\vect{y}) \\
&= \left( \sum_{d=0}^{N_l} 0^{N_l-d} q_{l,d}(\vect{x},0) \right) \oplus \left( \sum_{d=0}^{N_l}t^{N_l-d} q_{l,d}(\varphi(\vect{x}),t\vect{y}) \right) \\
&= q_{l,N_l}(\vect{x},\vect{0}) \oplus \left( \sum_{d=0}^{N_l}t^{N_l} q_{l,d}(\varphi(\vect{x}),\vect{y}) \right) \\
&=0\oplus t^{N_l} q_l(\varphi(\vect{x}),\vect{y}) \\
&=0\oplus 0,
\end{align*}
as desired.
Thus, $\omega\colon U\to Z_\varphi$ is a well-defined morphism.
We proceed to show that $\omega$ is bijective.

To show that $\omega$ is surjective, note that every $z=a\oplus f\in Z_{\varphi^+}$ can be written as
\[
z = (a\oplus\varphi(a)) + (0\oplus  (f-\varphi(a))).
\]
We have $f-\varphi(a)\in C_0((0,1],B^+)$.
Thus, it is enough to show that the image of $\omega$ contains $a\oplus\varphi(a)$, for $a\in A$, and $0\oplus f$, for $f\in C_0((0,1],B^+)$.

Let $a\in A$.
Since $\vect{x}$ generates $A$, we can choose a sequence of NC polynomials $(r_n)_n$ such that $\lim_n \|a-r_n(\vect{x})\|=0$.
Then
\[
\lim_n \|(a\oplus\varphi(a))-r_n(\tilde{\vect{x}})\|=0.
\]
Since each $r_n(\tilde{\vect{x}})$ belongs to the image of $\omega$, we obtain that $a\oplus\varphi(a)$ belongs to the image of $\omega$, as desired.

On the other hand, as in the proof of \cite[Lemma~7.1]{LorShu12NCSemialgLifting}, to show that $0\oplus C_0((0,1],B^+)$ belongs to the image of $\omega$, it is enough to verify that $0\oplus f$ is in the image of $\omega$ for $f$ given by $f(t)=t^s(ty_{j_1})(ty_{j_2})\ldots(ty_{j_n})$, for every $s,n\in\NN$ and $j_1,\ldots,j_n\in\NN$.
This follows since
\[
0\oplus f
= (\tilde{h})^s \tilde{y}_{j_1} \tilde{y}_{j_2} \ldots \tilde{y}_{j_n}.
\]

To show that $\omega$ is injective, let $z\in U$ with $z\neq 0$.
Choose an irreducible representation $\sigma\colon U\to\Bdd(K)$ with $\sigma(z)\neq 0$.
Set
\[
\vect{X}:=(\sigma(x_0),\sigma(x_1),\ldots),\andSep
\vect{Y}:=(\sigma(y_0),\sigma(y_1),\ldots),\andSep
H:=\sigma(h).
\]

The relation $(C)$ tells us that $H$ is a positive contraction that commutes with all operators in the image of $\sigma$.
Since $\sigma$ is irreducible, $H$ is a scalar multiple of the identity operator.
Hence, $H=\lambda 1$ for some $\lambda\in[0,1]$.
We distinguish the two cases $\lambda=0$ and $\lambda>0$.

Let $\pi\colon Z_{\varphi^+}\to A$ be given by $\pi(a\oplus f):=a$.
Then $\pi$ is a surjective morphism.
The kernel of $\pi$ is naturally identified with $C_0((0,1],B^+)$.
We have the the following short exact sequence
\begin{align*}
0\to C_0((0,1],B^+)\xrightarrow{\iota} Z_{\varphi^+}\xrightarrow{\pi}A\to 0.
\end{align*}

Case~1.
Assume that $\lambda=0$.
Then $H=0$, and the relations $(\widetilde{S}_{\text{norm}})$ imply that $\vect{Y}=0$.
Then $\vect{X}$ is a representation of $\langle\vect{x}|(R)\rangle$.
Let $\tau\colon A=\CAlgebra{\vect{x}}{(R)}\to\Bdd(K)$ be the induced morphism.
One checks that $\sigma$ agrees with $\tau\circ\pi\circ\omega$ on each generator of $U$, which implies that $\sigma=\tau\circ\pi\circ\omega$.
It follows that $\omega(z)\neq 0$, as desired.

Case~2.
Assume that $\lambda>0$.
Then $H=\lambda$.
Let us verify that $(\vect{X},\lambda^{-1}\vect{Y})$ is a representation of $\langle \vect{x},\vect{y} | (R),(S_{\text{norm}}),(S_{\text{pol}})\rangle$.
It is clear that $\vect{X}$ satisfy $(R)$.
Since $\vect{Y}$ satisfy $(\widetilde{S}_{\text{norm}})$ and $H=\lambda$, we have
\[
-d_j\lambda = -d_jH \leq Y_j\leq d_jH = d_j\lambda,
\]
and therefore $-d_j\leq\lambda^{-1}Y_j\leq d_j$, for every $j\in\NN$.
Thus, $\lambda^{-1}\vect{Y}$ satisfy $(S_{\text{norm}})$.

To verify $(S_{\text{pol}})$ for $(\vect{X},\lambda^{-1}\vect{Y})$, let $l\in\NN$.
We decompose $q_l$ according to the degree of homogeneity in $\vect{y}$ as above and compute
\begin{align*}
q_l(\vect{X},\lambda^{-1}\vect{Y})
&=\sum_{d=0}^{N_l}q_{l,d}(\vect{X},\lambda^{-1}\vect{Y}) \\
&=\sum_{d=0}^{N_l}\lambda^{-d}q_{l,d}(\vect{X},\vect{Y}) \\
&=\lambda^{-N_l}\sum_{d=0}^{N_l}\lambda^{N_l-d}q_{l,d}(\vect{X},\vect{Y}) \\
&=\lambda^{-N_l}\widetilde{q}_l(\vect{X},\vect{Y},H) \\
&= 0.
\end{align*}

Let $\tau\colon B\cong \CAlgebra{\vect{x},\vect{y}}{(R), (S_{\text{norm}}), (S_{\text{pol}})}\to\Bdd(K)$ be the induced morphism, and let $\tau^+\colon B^+\to\Bdd(K)$ be the unique extension to a unital morphism.
Let $\ev_\lambda\colon Z_{\varphi^+}\to B^+$ be given by $\ev_\lambda(a\oplus f):=f(\lambda)$.
We claim that $\sigma=\tau^+\circ\ev_\lambda\circ\omega$.
It is enough to verify this on the generators of $U$.
We have
\[
(\tau^+\circ\ev_\lambda\circ\omega)(h)
= (\tau^+\circ\ev_\lambda)(0\oplus t)
= \tau^+(\lambda)
= \lambda = H = \sigma(h).
\]
Further,
\[
(\tau^+\circ\ev_\lambda\circ\omega)(y_j)
= (\tau^+\circ\ev_\lambda)(0\oplus ty_j)
= \tau^+(\lambda y_j)
= \lambda \tau(y_j)
= \lambda \lambda^{-1} Y_j
= \sigma(y_j),
\]
for each $j\in\NN$.
Moreover,
\[
(\tau^+\circ\ev_\lambda\circ\omega)(x_k)
= (\tau^+\circ\ev_\lambda)(x_k\oplus \varphi(x_k))
= \tau^+(\varphi(x_k))
= \tau(\bar{x}_k)
= X_k = \sigma(x_k),
\]
for each $k\in\NN$.
Thus, $\sigma=\tau^+\circ\ev_\lambda\circ\omega$.
It follows that $\omega(z)\neq 0$, as desired.
\end{proof}

In the next result, we use $[x,y]$ to denote the commutator $[x,y]:=xy-yx$.

\begin{lma}
\label{prp:softened_mapping_cylinder_SP}
We retain the notation from \autoref{prp:presentation_mapping_cylinder}.
Given $n\in\NN$, set
\begin{align*}
Z_{\varphi^+}^{(n)}
:=\CAlgebraBig
{
\vect{x} \\ \vect{y} \\ h
}{
(R):\ -c_k\leq x_k\leq c_k,\ q_k(\vect{x})=0, \quad k\in\NN \\
0\leq h\leq 1 \\
(\widetilde{S}_{\text{norm}}):\ -d_jh\leq y_j\leq d_jh, \quad j\in\NN \\
(\widetilde{S}_{\text{pol}}^n):\ \|\widetilde{q}_l(\vect{x},\vect{y},h)\|\leq 1/n, \quad l=0,1,\ldots,n \\
(C^n):\ \|[h,x_k]\|, \|[h,y_k]\|\leq 1/n, \quad k=0,1,\ldots,n
}.
\end{align*}
Assume that $A=\CAlgebra{\vect{x}}{(R)}$ is semiprojective.
Then $Z_{\varphi^+}^{(n)}$ is semiprojective.
\end{lma}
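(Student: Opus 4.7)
The plan is to verify the defining lifting property of semiprojectivity. Fix a \ca{} $C$, an increasing chain of ideals $J_0\subseteq J_1\subseteq\cdots$ in $C$ with union $J_\infty:=\overline{\bigcup_m J_m}$, and a morphism $\sigma\colon Z_{\varphi^+}^{(n)}\to C/J_\infty$. I will construct, for some $m\in\NN$, a lift $\tilde\sigma\colon Z_{\varphi^+}^{(n)}\to C/J_m$ with $\pi_{\infty,m}\circ\tilde\sigma=\sigma$. The overall argument parallels the proof that softened cones are semiprojective, as in \cite[Lemma~7.1]{LorShu12NCSemialgLifting}; the new feature is that part of the presentation comes from the semiprojective algebra $A$ and must be handled by invoking the lifting property of $A$ separately.

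I would first lift the $\vect{x}$-generators. The relations $(R)$ in $Z_{\varphi^+}^{(n)}$ coincide with those defining $A$, so $x_k\mapsto x_k$ yields a morphism $A\to Z_{\varphi^+}^{(n)}$; composing with $\sigma$ gives a morphism $A\to C/J_\infty$. Semiprojectivity of $A$ provides an index $m_1$ and a lift $A\to C/J_{m_1}$, producing self-adjoint $X_k\in C/J_{m_1}$ satisfying $(R)$ exactly and lifting $\sigma(x_k)$. Next, $\sigma(h)$ is a positive contraction; any self-adjoint lift, cut off by the functional calculus $t\mapsto\max(0,\min(t,1))$, gives a positive contraction $H\in C/J_{m_1}$ lifting $\sigma(h)$.

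The technical core is to lift the infinite family $\vect{y}$ so that the order relations $(\widetilde{S}_{\text{norm}})$ hold with respect to the already fixed $H$, uniformly in $j\in\NN$. For each $j$ the elements $d_j\sigma(h)\pm\sigma(y_j)\in C/J_\infty$ are positive, so they admit positive lifts $A_j^{\pm}\in C/J_{m_1}$ (take a self-adjoint lift and cut off the negative part). Setting $Y_j:=(A_j^{+}-A_j^{-})/2$ gives a self-adjoint lift of $\sigma(y_j)$, and the deviation $e_j:=(A_j^{+}+A_j^{-})/2-d_jH$ lies in $J_\infty/J_{m_1}$ and vanishes in $C/J_\infty$. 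The main obstacle is arranging the positivity $d_jH\pm Y_j\ge 0$ exactly, in a single common quotient $C/J_m$, for all $j$ at once. Following the Loring--Shulman technique, this is done by replacing $A_j^{\pm}$ by corrected positive elements that absorb $e_j$, and then passing to $C/J_m$ for $m$ large enough that the residual correction is absorbed by the ideal. Since no polynomial relations in the presentation of $Z_{\varphi^+}^{(n)}$ directly constrain the $\vect{y}$, these corrections can be performed independently for each $j$, so a single choice of $m$ suffices.

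Finally, I would verify the finitely many softened relations $(\widetilde{S}_{\text{pol}}^n)$ and $(C^n)$, each involving only the first $n+1$ generators and of the form $\|\cdot\|\le 1/n$. Since $\sigma$ factors through $Z_{\varphi^+}^{(n)}$, the corresponding expressions in $\sigma(\vect{x}),\sigma(\vect{y}),\sigma(h)$ have norm at most $1/n$ in $C/J_\infty$. The quotient norms $\|\cdot\|_{C/J_m}$ of the same polynomial expressions in $\vect{X},\vect{Y},H$ are non-increasing in $m$ and converge to the $C/J_\infty$ norm, so for $m$ large enough, possibly after a harmless rescaling $Y_j\mapsto(1-\varepsilon)Y_j$ and $H\mapsto(1-\varepsilon)H$ to accommodate the edge case of equality in the bound $1/n$, all of these finitely many soft relations hold in $C/J_m$. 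This produces the desired lift $\tilde\sigma$ and shows that $Z_{\varphi^+}^{(n)}$ is semiprojective.
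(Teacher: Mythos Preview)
Your overall architecture matches the paper's proof: lift $\vect{x}$ using semiprojectivity of $A$, lift $h$ as a positive contraction, lift $\vect{y}$ respecting the order relations, and then arrange the finitely many softened relations. Two steps, however, are not handled correctly.

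\textbf{Order lifting.} Your construction of $Y_j$ from positive lifts $A_j^\pm$ does not yield $-d_jH\leq Y_j\leq d_jH$; as you note, one is off by the error term $e_j$, and your ``absorb $e_j$ and pass to larger $m$'' sketch gives no mechanism for eliminating $e_j$ exactly in any fixed quotient. The paper bypasses this by invoking Davidson's order lifting theorem \cite[Corollary~2.2]{Dav91LifingPosElts} (see also \cite[Corollary~8.2.3]{Lor97LiftingSolutions}): given $-d_jh\leq y_j\leq d_jh$ in $C/J$ and a fixed lift $\tilde h$, one obtains a lift $\tilde y_j$ in the \emph{same} $C/J_m$ with $-d_j\tilde h\leq\tilde y_j\leq d_j\tilde h$ exactly, for each $j$ independently. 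No further passage to larger $m$ is needed for this step.

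\textbf{The softened relations.} This is where there is a genuine gap. You correctly observe that if $\|\widetilde q_l(\sigma(\vect{x}),\sigma(\vect{y}),\sigma(h))\|$ or $\|[\sigma(h),\sigma(x_k)]\|$ happens to equal $1/n$ exactly, the decreasing sequence of quotient norms in $C/J_m$ may never reach $1/n$. Your proposed fix, rescaling $Y_j\mapsto(1-\varepsilon)Y_j$ and $H\mapsto(1-\varepsilon)H$ by a scalar, is not ``harmless'': the rescaled elements no longer lift $\sigma(y_j)$ and $\sigma(h)$, so the resulting morphism is not a lift of $\sigma$. The paper's remedy is different and rests on a structural feature of the presentation that you do not mention: every polynomial appearing in $(\widetilde{S}_{\text{pol}}^n)$ and $(C^n)$ is homogeneous of degree at least one in the variables $\vect{y},h$ (though not in $\vect{x}$). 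This is precisely the hypothesis of \cite[Theorem~3.2]{LorShu12NCSemialgLifting}, which produces a positive contraction $e$ with image $1$ in $C/J$ such that the compressions $\tilde h':=e\tilde h e$ and $\tilde y_j':=e\tilde y_j e$ (leaving $\tilde{\vect{x}}$ untouched) satisfy the finitely many norm bounds $\leq 1/n$. Because $e$ maps to $1$ in the quotient, the compressed elements are still lifts; and because $a\mapsto eae$ is positive, the order relations $-d_j\tilde h'\leq\tilde y_j'\leq d_j\tilde h'$ are preserved automatically. Your scalar rescaling is a shadow of this idea, but only the operator-valued version keeps the lift intact.
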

\begin{proof}
The proof goes along the lines of \cite[Lemma~7.2]{LorShu12NCSemialgLifting}.
Let $C$ be a \ca{} with an increasing sequence $J_0\lhd J_1\lhd\ldots \lhd C$ of ideals and set $J:=\overline{\bigcup_mJ_m}$.
Let $\vect{x}=(x_0,x_1,\ldots)$, $\vect{y}=(y_0,y_1,\ldots)$, and $h$ be (sequences) of self-adjoint elements in $C/J$ satisfying the relations defining $Z_{\varphi^+}^{(n)}$.
We need to find $m\in\NN$ and lifts $\tilde{\vect{x}}$, $\tilde{\vect{y}}$, and $\tilde{h}$ in $C/J_m$ that satisfy the same relations.

Since $\CAlgebra{\vect{x}}{(R)}$ is semiprojective, there exist $m$ and a sequence $\tilde{\vect{x}}=(\tilde{x}_0,\tilde{x}_1,\ldots)$ of self-adjoint elements in $C/J_m$ that satisfy $(R)$ and that lift $\vect{x}$.
We may also find a lift $\tilde{h}\in C/J_m$ of $h$ such that $0\leq\tilde{h}\leq 1$.
Applying Davidson's order lifting theorem, \cite[Corollary~2.2]{Dav91LifingPosElts}, see also  \cite[Corollary~8.2.3, p.63]{Lor97LiftingSolutions}, we find a lift $\tilde{\vect{y}}$ of $\vect{y}$ in $C/J_m$ that satisfies $(\widetilde{S}_{\text{norm}})$, that is, such that $-d_j\tilde{h}\leq\tilde{y}_j\leq d_j\tilde{h}$ for every $j\in\NN$.

Note that the finitely many polynomials defining $(\widetilde{S}_{\text{pol}}^n)$ and $(C^n)$ involve only finitely many variables and are homogeneous (of degree at least one) in the variables $\vect{y}, h$ (but not necessarily in $\vect{x}$).
Let $\pi\colon C/J_m\to C/J$ denote the quotient morphism.
We have
\[
\left\| \widetilde{q}_l \big( \pi(\tilde{\vect{x}}),\pi(\tilde{\vect{y}}),\pi(\tilde{h}) \big) \right\|
= \left\| \widetilde{q}_l \big( \vect{x},\vect{y},h \big) \right\| \leq 1/n,
\]
for $l=0,\ldots,n$, and
\[
\left\| \big[ \pi(\tilde{h}),\pi(\tilde{x}_k) \big] \right\|
= \left\| \big[ h,x_k \big] \right\|\leq 1/n,\andSep
\left\| \big[ \pi(\tilde{h}),\pi(\tilde{y}_k)\big] \right\|
= \left\| \big[ h,y_k \big] \right\|\leq 1/n,
\]
for $k=0,\ldots,n$.
By \cite[Theorem~3.2]{LorShu12NCSemialgLifting} there exists $e\in J_m+1$ with $0\leq e\leq 1$ such that $\tilde{h}':=e\tilde{h}e$ and the sequence $\tilde{\vect{y}}' := e\tilde{\vect{y}}e = (e\tilde{y_0}e,e\tilde{y_1}e,\ldots)$ lift $h$ and $\vect{y}$, and such that $\tilde{h}'$, $\tilde{\vect{y}}'$ and $\tilde{\vect{x}}$ satisfy $(\widetilde{S}_{\text{pol}}^n)$ and $(C^n)$.
Note that $\tilde{h}'$ and $\tilde{\vect{y}}'$ also satisfy $(\widetilde{S}_{\text{norm}})$ as
\[
-d_j \tilde{h}'
= -d_j e\tilde{h}e
= e(-d_j\tilde{h})e
\leq e\tilde{y}_je = \tilde{y}_j' 
\leq e(-d_j\tilde{h})e
=d_j \tilde{h}',
\]
for every $j\in\NN$.
This shows the semiprojectivity of $Z_{\varphi^+}^{(n)}$.
\end{proof}

The following is the main technical result of this paper.
It shows that certain mapping cylinders are isomorphic to inductive limits of semiprojective \ca{s}.

\begin{thm}
\label{prp:mapping_cylinder_limit_SP}
Let $A$ and $B$ be separable \ca{s}, and let $\varphi\colon A\to B$ be a morphism.
If $A$ is semiprojective, then the mapping cylinder $Z_\varphi$ has a strong shape system.
\end{thm}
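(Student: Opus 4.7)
The plan is to proceed in two steps: first show that $Z_{\varphi^+}$ is an inductive limit of the semiprojective algebras $Z_{\varphi^+}^{(n)}$ from \autoref{prp:softened_mapping_cylinder_SP}, and then descend to $Z_\varphi$ via the short exact sequence $0 \to Z_\varphi \to Z_{\varphi^+} \to C_0((0,1]) \to 0$, using that the quotient is projective and invoking \cite[Corollary~3.1.3]{End16CharSemiprojSubhom} level-wise in the system.

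For the first step, I would build an inductive system with connecting morphisms $\pi_{n+1,n} \colon Z_{\varphi^+}^{(n)} \to Z_{\varphi^+}^{(n+1)}$ induced by the identity on generators. For $m > n$, the defining relations of $Z_{\varphi^+}^{(m)}$ are strictly more restrictive than those of $Z_{\varphi^+}^{(n)}$ (smaller tolerances $1/m < 1/n$ and a wider range of indices), so generators of $Z_{\varphi^+}^{(m)}$ tautologically satisfy the relations of $Z_{\varphi^+}^{(n)}$; the universal property therefore yields surjections $\pi_{n+1,n}$. These are compatible with the natural surjections $p_n \colon Z_{\varphi^+}^{(n)} \to Z_{\varphi^+}$, inducing a surjection $\varinjlim_n Z_{\varphi^+}^{(n)} \to Z_{\varphi^+}$. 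In the other direction, one verifies that in $\varinjlim_n Z_{\varphi^+}^{(n)}$ the image of $\widetilde{q}_l(\vect{x}, \vect{y}, h)$ has norm at most $1/n$ for all $n \geq l$, hence is zero, and similarly the commutators $[h, x_k]$ and $[h, y_k]$ vanish in the limit. Together with $(R)$ and $(\widetilde{S}_{\text{norm}})$, this shows the limit satisfies the full relations of \autoref{prp:presentation_mapping_cylinder}, providing the inverse. Hence $Z_{\varphi^+}$ has a strong shape system.

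For the second step, the character $\chi \colon B^+ \to \CC$ induces a surjection $q \colon Z_{\varphi^+} \to C_0((0,1])$ given by $(a, f) \mapsto \big( t \mapsto \chi(f(t)) \big)$; this lands in $C_0((0,1])$ because $f(0) = \varphi(a) \in B \subseteq \ker \chi$, and $\ker q = Z_\varphi$. As the cone over $\CC$, the algebra $C_0((0,1])$ is projective. For each $n$, the composition $q \circ p_n \colon Z_{\varphi^+}^{(n)} \to C_0((0,1])$ is surjective, since the image of the generator $h$ is the generator $t$ of $C_0((0,1])$. Its kernel $K_n$ is therefore an ideal in the semiprojective algebra $Z_{\varphi^+}^{(n)}$ with projective quotient, so $K_n$ is semiprojective by \cite[Corollary~3.1.3]{End16CharSemiprojSubhom}. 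The connecting surjections $\pi_{n+1,n}$ restrict to maps $K_n \to K_{n+1}$, and by exactness of sequential inductive limits of \ca{s}, $\varinjlim_n K_n = \ker q = Z_\varphi$. This exhibits $Z_\varphi$ as an inductive limit of semiprojective \ca{s}.

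The main technical point requiring care is the identification $Z_{\varphi^+} \cong \varinjlim_n Z_{\varphi^+}^{(n)}$: one must argue that the soft polynomial and commutator relations really do collapse to exact equalities in the limit, and that no spurious relations appear. Once that is secured, the descent to $Z_\varphi$ via Enders' ideal result and exactness of inductive limits is routine.
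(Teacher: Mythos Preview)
Your proposal is correct and follows essentially the same route as the paper: write $Z_{\varphi^+}\cong\varinjlim_n Z_{\varphi^+}^{(n)}$ via the natural surjections between the universal \ca{s}, set $K_n:=\ker(q\circ p_n)$ (the paper's $D_n$), and use Enders' result that an ideal with projective quotient $C_0((0,1])$ in a semiprojective \ca{} is semiprojective, together with exactness of inductive limits, to get $Z_\varphi\cong\varinjlim_n K_n$. The paper treats the identification $Z_{\varphi^+}\cong\varinjlim_n Z_{\varphi^+}^{(n)}$ as immediate from the construction, while you spell out why the soft relations collapse in the limit; otherwise the arguments coincide.
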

\begin{proof}
The proof goes along the lines of \cite[Theorem~7.4]{LorShu12NCSemialgLifting}.
Let $\varphi^+\colon A\to B^+$ denote the composition of $\varphi$ with $B\subseteq B^+$, as in \autoref{prp:presentation_mapping_cylinder}.
Recall that
\[
Z_{\varphi^+} = \big\{ (a,f) \in A\oplus C([0,1],B^+) : \varphi(a)=f(0) \big\}.
\]
Let $\varrho\colon B^+\to\CC$ be the natural quotient morphism, such that $B=\ker(\varrho)$.
Given $(a,f)\in Z_{\varphi^+}$ we have $f(0)\in B$ and therefore $\varrho(f(0))=0$.
We may therefore define $\pi\colon Z_{\varphi^+}\to C_0((0,1])$ by $\pi((a,f)):=\varrho\circ f$, for $a\in A$ and $f\in C([0,1],B^+)$.

The natural inclusion $C([0,1],B)\to C([0,1],B^+)$ induces an injective morphism $\iota\colon Z_\varphi\to Z_{\varphi^+}$, which we use to identify $Z_\varphi$ with a sub-\ca{} of $Z_{\varphi^+}$.
The kernel of $\pi$ is $Z_\varphi$.
Hence, we have a short exact sequence:
\begin{align*}
0\to Z_\varphi \xrightarrow{\iota}  Z_{\varphi^+} \xrightarrow{\pi} C_0((0,1])\to 0.
\end{align*}

For $n\in\NN$, let $Z_{\varphi^+}^{(n)}$ be defined as in \autoref{prp:softened_mapping_cylinder_SP}.
By construction, we have natural surjective morphisms $\gamma_{n+1,n}\colon Z_{\varphi^+}^{(n)}\to Z_{\varphi^+}^{(n+1)}$ such that the resulting inductive limit is isomorphic to $Z_{\varphi^+}$.
Let $\gamma_{\infty,n}\colon Z_{\varphi^+}^{(n)}\to Z_{\varphi^+}$ be the surjective morphism to the inductive limit.
For each $n\in\NN$, set
\[
D_n := \ker(\pi\circ\gamma_{\infty,n}).
\]
The morphism $\gamma_{n+1,n}$ induces a morphism $\psi_{n+1,n}\colon D_n\to D_{n+1}$.
Then $Z_\varphi\cong\varinjlim_n D_n$.
The situation is shown in the following commutative diagram, whose rows are short exact sequences:
\[
\xymatrix{
0 \ar[r] & Z_\varphi \ar[r]^{\iota} & Z_{\varphi^+} \ar[r]^-{\pi} & C_0((0,1]) \ar[r] & 0 \\
0 \ar[r] & D_{n+1} \ar[r] \ar@{->>}[u]^{\psi_{\infty,n}} & Z_{\varphi^+}^{(n+1)} \ar[r] \ar@{->>}[u]^{\gamma_{\infty,n}} & C_0((0,1]) \ar[r] \ar@{=}[u] & 0 \\
0 \ar[r] & D_n \ar[r] \ar@{->>}[u]^{\psi_{n+1,n}} & Z_{\varphi^+}^{(n)} \ar[r] \ar@{->>}[u]^{\gamma_{n+1,n}} & C_0((0,1]) \ar[r] \ar@{=}[u] & 0 \ .
}
\]

By \autoref{prp:softened_mapping_cylinder_SP}, each $Z_{\varphi^+}^{(n)}$ is semiprojective.
An ideal $J$ in a semiprojective \ca{} $E$ is semiprojective if the quotient $E/J$ is projective;
see \cite[Corollary~3.1.3]{End16CharSemiprojSubhom}, which generalizes \cite[Theorem~5.3]{LorPed98ProjAF-telescope}.
Since $C_0((0,1])$ is projective, it follows that each $D_n$ is semiprojective.
Thus, $Z_\varphi$ is isomorphic to an inductive limit of semiprojective \ca{s}, as desired.
\end{proof}

\section{\texorpdfstring{$C^*$-algebras}{C*-algebras} with strong shape system}
\label{sec:sss}

In this section, we study closure properties of the class of separable \ca{s} that have a strong shape system.
Recall that a separable \ca{} is said to have a strong shape system if it is isomorphic to a sequential inductive limit of separable, semiprojective \ca{s};
see \cite[Definition~4.1]{Bla85ShapeThy}.
The main result of this section (and the whole paper) is \autoref{prp:sss_shDomination}:
The class separable \ca{s} that have a strong shape system is closed under shape domination.
Hence, if two separable \ca{s} are shape equivalent (in particular, if they are homotopy equivalent), then one has a strong shape system if and only if the other does;
see \autoref{prp:sss_shEquivalence}.

This provides many new examples of \ca{s} with a strong shape system.
For example, $C([0,1]^n,A)$ has a strong shape system for every UCT-Kirchberg algebra~$A$;
see \autoref{exa:ShKirchberg}.

\begin{prp}
\label{prp:sssUnitization}
A separable \ca{} $A$ has a strong shape system if and only if its minimal unitization $\widetilde{A}$ does.
\end{prp}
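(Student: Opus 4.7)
The case where $A$ is already unital is trivial, so I assume throughout that $A$ is non-unital and hence $\widetilde{A}=A^+$.

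For the forward implication, given a strong shape system $A=\varinjlim_n A_n$ with each $A_n$ separable and semiprojective, I would extend each connecting morphism $A_n\to A_{n+1}$ to the unital morphism $\widetilde{A_n}\to\widetilde{A_{n+1}}$ that sends the adjoined unit to the adjoined unit, and verify that the inductive limit of the new system is $\widetilde{A}$. Each $\widetilde{A_n}$ is semiprojective by the classical stability of semiprojectivity under (forced) unitization, so this exhibits a strong shape system for $\widetilde{A}$.

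For the reverse implication, let $\widetilde{A}=\varinjlim_n B_n$ with each $B_n$ separable and semiprojective, and let $\pi\colon\widetilde{A}\to\CC$ be the canonical character, so that $A=\ker\pi$. The plan is to exhibit a strong shape system for $A$ of the form $(A_n)$ with $A_n:=\ker(\pi\circ\gamma_{\infty,n})\subseteq B_n$. First I would reduce to the case in which each $B_n$ is unital and the connecting maps are unital: since $\CC$ is semiprojective and $\widetilde{A}$ is unital, the unital embedding $\CC\to\widetilde{A}$ factors through $B_n$ for large $n$, producing a projection $p_n\in B_n$ with $\gamma_{\infty,n}(p_n)=1_{\widetilde{A}}$; a standard Elliott-style intertwining then lets one pass to a subsystem in which $\gamma_{n+1,n}(p_n)=p_{n+1}$, after which I replace $B_n$ by its unital corner $p_nB_np_n$, which remains semiprojective.

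With $B_n$ unital, $\chi_n:=\pi\circ\gamma_{\infty,n}\colon B_n\to\CC$ is a character making $B_n$ the minimal unitization of $A_n$, and the relations $\chi_{n+1}\circ\gamma_{n+1,n}=\chi_n$ turn $(A_n)$ into an inductive system with limit $\ker\pi=A$. The remaining key point is the semiprojectivity of each $A_n$: given a test diagram $(C,\,J_0\subseteq J_1\subseteq\cdots,\,\sigma\colon A_n\to C/J_\infty)$, I would extend $\sigma$ to the unital morphism $\tilde\sigma\colon B_n\to C^+/J_\infty=(C/J_\infty)^+$ defined by $a+\lambda\cdot 1_{B_n}\mapsto\sigma(a)+\lambda\cdot 1$, apply the semiprojectivity of $B_n$ to the ideals $(J_m)$ inside $C^+$ to obtain a lift $\tilde\psi\colon B_n\to C^+/J_m$, and verify that the composition $A_n\hookrightarrow B_n\xrightarrow{\tilde\psi}C^+/J_m\twoheadrightarrow\CC$ vanishes modulo $J_\infty/J_m$ — so that $\tilde\psi$ restricts to the required lift $A_n\to C/J_m$ of $\sigma$. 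This is morally a direct instance of Enders' result \cite[Corollary~3.1.3]{End16CharSemiprojSubhom}, usable here because the splitting $\CC\to B_n$ supplied by $1_{B_n}$ compensates for the non-projectivity of $\CC$.

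The main obstacle is the first reduction: lifting $1_{\widetilde{A}}$ via the semiprojectivity of $\CC$ is immediate, but synchronizing the projections $p_n$ across the telescope and invoking the semiprojectivity of unital corners of semiprojective \ca{s} both require some care. Once the reduction is carried out, the semiprojectivity verification for $A_n$ becomes clean precisely because $B_n$ is itself the minimal unitization of $A_n$, so the extension $\tilde\sigma$ of $\sigma$ is canonical.
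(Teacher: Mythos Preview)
Your forward direction matches the paper's. For the backward direction, the paper takes a different and cleaner route that sidesteps the obstacle you flag. Rather than reducing to unital $B_n$ via corners, the paper invokes \cite[Proposition~4.8]{Thi11arX:IndLimPj} to arrange that all connecting maps $\gamma_{n+1,n}$ are \emph{surjective}; this forces $\pi\circ\gamma_{\infty,n}\colon B_n\to\CC$ to be surjective, so $A_n:=\ker(\pi\circ\gamma_{\infty,n})$ is an ideal of codimension one in the semiprojective algebra $B_n$. The paper then concludes by citing Enders' theorem \cite[Theorem~3.2]{End17SemiprojFiniteCodim} that subalgebras of finite codimension in semiprojective \ca{s} are again semiprojective. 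No corner reduction, and no hand-rolled lifting argument, is needed.

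Your proposed reduction has a genuine gap: the assertion that the corner $p_nB_np_n$ of a semiprojective \ca{} $B_n$ ``remains semiprojective'' is not a standard result, and you offer no argument for it. Semiprojectivity is not known to pass to arbitrary corners, so the ``care'' you allude to would have to be an actual proof, and none is in sight. Note also that \cite[Corollary~3.1.3]{End16CharSemiprojSubhom} does not apply, since $\CC$ is not projective; your instinct that the splitting $\CC\to B_n$ compensates is correct, but the clean way to encode this is exactly Enders' finite-codimension result \cite[Theorem~3.2]{End17SemiprojFiniteCodim}. Once you have that reference, you can apply it directly to $A_n\lhd B_n$ (after ensuring $\chi_n\neq 0$, which your lifted projection $p_n$ already guarantees) and drop the corner reduction entirely. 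Incidentally, in your final lifting verification the composition $A_n\to C^+/J_m\to\CC$ vanishes on the nose, not merely ``modulo $J_\infty/J_m$'': the scalar-part maps $C^+/J_m\to\CC$ and $C^+/J_\infty\to\CC$ are compatible with $\pi_{\infty,m}$, so $q_m\circ\tilde\psi|_{A_n}=q_\infty\circ\tilde\sigma|_{A_n}=0$.
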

\begin{proof}
If $A$ is unital, then there is nothing to show.
So assume that $A$ is non-unital.
Note that a \ca{} $B$ is semiprojective if and only if $\widetilde{B}$ is;
see \cite[Theorem~14.1.7, p.108]{Lor97LiftingSolutions}.

To show the forward implication, assume that $A=\varinjlim_n A_n$, with each $A_n$ a semiprojective \ca{}.
Then $\widetilde{A}\cong\varinjlim_n \widetilde{A}_n$, and each $\widetilde{A}_n$ is semiprojective.

To show the backward implication, assume that $\widetilde{A}=\varinjlim_n B_n$, with each $B_n$ a semiprojective \ca{}.
Let $\gamma_{n+1,n}\colon B_n\to B_{n+1}$ be the connecting morphisms.
By \cite[Proposition~4.8]{Thi11arX:IndLimPj}, we may assume that each $\gamma_{n+1,n}$ is surjective.
Let $\pi\colon\widetilde{A}\to\CC$ be the quotient map such that $A=\ker(\pi)$.
For each $n\in\NN$, set $A_n:=\ker(\pi\circ\gamma_{\infty,n})$.
The morphism $\gamma_{n+1,n}$ induces a morphism $\psi_{n+1,n}\colon A_n\to A_{n+1}$.
Then $A\cong\varinjlim_n A_n$.
The situation is shown in the following commutative diagram, whose rows are short exact sequences:
\[
\xymatrix{
0 \ar[r] & A \ar[r]^{\iota} & \widetilde{A} \ar[r]^{\pi} & \CC \ar[r] & 0 \\
0 \ar[r] & A_{n+1} \ar[r] \ar[u]^{\psi_{\infty,n+1}} & B_{n+1} \ar[r] \ar[u]^{\gamma_{\infty,n+1}} & \CC \ar[r] \ar@{=}[u] & 0 \\
0 \ar[r] & A_n \ar[r] \ar[u]^{\psi_{n+1,n}} & B_n \ar[r] \ar[u]^{\gamma_{n+1,n}} & \CC \ar[r] \ar@{=}[u] & 0 \\
}.
\]
For each $n$, the algebra $A_n$ is an ideal of finite codimension in the semiprojective \ca{} $B_n$.
It follows from \cite[Theorem~3.2]{End17SemiprojFiniteCodim} that $A_n$ is semiprojective.
Hence, $A$ has a strong shape system, as desired.
\end{proof}

By \cite[Corollary~4.5]{Thi11arX:IndLimPj}, every separable, contractible \ca{} is an inductive limit of projective \ca{s}.
Combining this result with \autoref{prp:sssUnitization}, we obtain:

\begin{cor}
Let $A$ be a contractible \ca{s}.
Then $\widetilde{A}$ has a strong shape system.
\end{cor}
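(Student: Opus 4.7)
The plan is to combine the two ingredients cited immediately before the statement. By \cite[Corollary~4.5]{Thi11arX:IndLimPj}, the separable contractible \ca{} $A$ can be written as a sequential inductive limit $A\cong\varinjlim_n A_n$ in which each $A_n$ is a separable, projective \ca. Since projectivity is strictly stronger than semiprojectivity (the lifting property required for semiprojectivity is merely the restriction of the projective lifting property to inductive sequences of ideals), each $A_n$ is in particular semiprojective. Thus $A$ itself already has a strong shape system in the sense of \autoref{sec:shape}.

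Now I would invoke \autoref{prp:sssUnitization}, which asserts that a separable \ca{} has a strong shape system if and only if its minimal unitization does. Applying the backward direction to the conclusion of the previous paragraph yields that $\widetilde{A}$ has a strong shape system, which is the claim.

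There is essentially no obstacle beyond noting the two implications $\text{(projective)}\Rightarrow\text{(semiprojective)}$ and citing the results in the correct order; the work has been done in \cite{Thi11arX:IndLimPj} and \autoref{prp:sssUnitization}. One small bookkeeping point to verify in writing is that the inductive limit produced by \cite[Corollary~4.5]{Thi11arX:IndLimPj} is indeed sequential (which is the sense of \emph{strong shape system} used here), and that when $A$ is already unital the statement is trivially covered since then $\widetilde{A}\cong A\oplus\CC$ and $A$ itself is contractible only in the degenerate sense (but the corollary still goes through since the forward implication of \autoref{prp:sssUnitization} allows us to pass from any sequential inductive limit of semiprojectives for $A$ to one for $\widetilde{A}$ by unitizing termwise, as used in its proof).
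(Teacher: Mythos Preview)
Your proposal is correct and follows exactly the paper's approach: use \cite[Corollary~4.5]{Thi11arX:IndLimPj} to write $A$ as an inductive limit of projective (hence semiprojective) \ca{s}, then invoke \autoref{prp:sssUnitization}. One minor slip: in your second paragraph you call the implication you need the ``backward direction'' of \autoref{prp:sssUnitization}, but going from a strong shape system for $A$ to one for $\widetilde{A}$ is the \emph{forward} direction (as you yourself note in the final paragraph); also, the digression about the unital case is unnecessary, since a nonzero contractible \ca{} is automatically non-unital.
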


\begin{lma}
\label{prp:factorization}
Let $A$, $B$ and $C$ be \ca{s}, and let $\gamma\colon A\to C$, $\alpha\colon A\to B$ and $\beta\colon B\to C$ be morphisms satisfying $\gamma\simeq\beta\circ\alpha$.
Then there exist morphisms $\varphi\colon A\to Z_\beta$ and $\omega\colon Z_\beta\to C$ such that $\gamma=\omega\circ\varphi$.
\end{lma}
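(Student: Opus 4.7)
The plan is to construct $\varphi$ and $\omega$ by direct manipulation, exploiting the pullback description of $Z_\beta$ from \autoref{pgr:mappinCyl}. Recall that
\[
Z_\beta = \big\{ (b,f) \in B \oplus C([0,1],C) : \beta(b) = f(0) \big\}.
\]

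First, I would define the ``evaluation at $1$'' morphism $\omega \colon Z_\beta \to C$ by $\omega(b,f) := f(1)$. This is clearly a well-defined \stHom{}, since the conditions defining $Z_\beta$ impose no restriction on $f(1)$.

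Next, I would use the hypothesis $\gamma \simeq \beta \circ \alpha$ to extract a homotopy morphism $H \colon A \to C([0,1],C)$ satisfying $H(a)(0) = \beta(\alpha(a))$ and $H(a)(1) = \gamma(a)$ for every $a \in A$ (reparametrizing the homotopy if necessary to orient it from $\beta\circ\alpha$ to $\gamma$). Then I would define
\[
\varphi \colon A \to Z_\beta, \quad \varphi(a) := (\alpha(a), H(a)).
\]
The compatibility condition $\beta(\alpha(a)) = H(a)(0)$ places $\varphi(a)$ in $Z_\beta$, and $\varphi$ is a morphism since $\alpha$ and $H$ are.

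Finally, checking the factorization is immediate:
\[
(\omega \circ \varphi)(a) = \omega(\alpha(a), H(a)) = H(a)(1) = \gamma(a),
\]
as desired. There is essentially no hard step here; the only care needed is to orient the homotopy correctly so that the endpoint at $0$ matches $\beta \circ \alpha$ (matching the convention that $\ev_0$ is used in the pullback defining $Z_\beta$), and to recognize that the construction is just the universal property of the pullback applied to the pair $(\alpha, H)$.
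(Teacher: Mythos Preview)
Your proof is correct and essentially identical to the paper's: both extract a homotopy $H$ (the paper calls it $\psi$) from $\beta\circ\alpha$ to $\gamma$, use it together with $\alpha$ to define $\varphi$ via the pullback description of $Z_\beta$, and set $\omega$ to be evaluation at $1$. The only cosmetic difference is that you write $\varphi(a)=(\alpha(a),H(a))$ explicitly, whereas the paper phrases this as invoking the universal property of the pullback.
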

\begin{proof}
The homotopy $\gamma\simeq\beta\circ\alpha$ is given by a morphism $\psi\colon A\to C([0,1],C)$ satisfying
\[
\beta\circ\alpha=\ev_0\circ\psi,\andSep \gamma=\ev_1\circ\psi.
\]
The mapping cylinder $Z_\beta$ is the pullback of $B$ and $C([0,1],C)$ along $\beta$ and $\ev_0$.
Let $\delta\colon Z_\beta\to C([0,1],C)$ be the natural morphism from the pullback.
By the universal property of pullbacks, the morphisms $\alpha$ and $\psi$ induce a morphism $\varphi\colon A\to Z_\beta$ such that $\delta\circ\varphi=\psi$.
The situation is shown in the following commutative diagram:
\[
\xymatrix{
A \ar[rrr]^{\gamma} \ar@/_0.5pc/[ddr]_{\alpha} \ar@{-->}[dr]^{\varphi} \ar@/^0.5pc/[drr]^{\psi} & & & C \\
& Z_\beta \ar[r]^-{\delta} \ar[d] & C([0,1],C) \ar[ur]_{\ev_1} \ar[d]^{\ev_0} \\
& B \ar[r]_{\beta} & C\ .
}
\]
Set $\omega:=\ev_1\circ\delta$.
Then
\[
\omega\circ\varphi
= \ev_1\circ\delta\circ\varphi
= \ev_1\circ\psi
= \gamma,
\]
as desired.
\end{proof}

\begin{thm}
\label{prp:sss_shDomination}
Let $A$ and $B$ be separable \ca{s} with $A\precsim_{\Sh}B$.
Then, if $B$ has a strong shape system, so does $A$.
\end{thm}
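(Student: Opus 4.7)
The plan is to exploit the shape domination to replace the shape system of $A$ by a new inductive system whose terms are mapping cylinders $Z_{\beta_i}$, each of which has a strong shape system by \autoref{prp:mapping_cylinder_limit_SP}, and then extract a strong shape system for $A$ itself by a diagonal argument.

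I begin by fixing a strong shape system $(B_k,\theta_{k+1,k})$ for $B$. Since every morphism out of a semiprojective \ca{} is semiprojective, this is in particular a shape system for $B$. Choose any shape system $(A_n,\gamma_{n+1,n})$ for $A$. The hypothesis $A\precsim_{\Sh}B$ provides strictly increasing sequences $n_0<n_1<\cdots$ and $k_0<k_1<\cdots$, and morphisms $\alpha_i\colon A_{n_i}\to B_{k_i}$, $\beta_i\colon B_{k_i}\to A_{n_{i+1}}$ with $\beta_i\circ\alpha_i\simeq\gamma_{n_{i+1},n_i}$. Invoking \autoref{prp:factorization} for each $i$ yields morphisms $\varphi_i\colon A_{n_i}\to Z_{\beta_i}$ and $\omega_i\colon Z_{\beta_i}\to A_{n_{i+1}}$ with $\omega_i\circ\varphi_i=\gamma_{n_{i+1},n_i}$.

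Assembling these produces the interleaved inductive system
\[
A_{n_0}\xrightarrow{\varphi_0}Z_{\beta_0}\xrightarrow{\omega_0}A_{n_1}\xrightarrow{\varphi_1}Z_{\beta_1}\xrightarrow{\omega_1}A_{n_2}\to\cdots,
\]
whose $(A_{n_i})$-subsystem is cofinal and has inductive limit $\varinjlim_n A_n=A$. Consequently the cofinal $(Z_{\beta_i})$-subsystem, with connecting morphisms $\sigma_{i+1,i}:=\varphi_{i+1}\circ\omega_i$, has the same limit $A$. Since each $\beta_i$ has semiprojective domain $B_{k_i}$, \autoref{prp:mapping_cylinder_limit_SP} yields for each $i$ a decomposition $Z_{\beta_i}\cong\varinjlim_j E_{i,j}$ with separable semiprojective $E_{i,j}$, and by \cite[Proposition~4.8]{Thi11arX:IndLimPj} I may arrange all internal connecting morphisms $E_{i,j}\to E_{i,j+1}$ to be surjective.

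It remains to perform a diagonal construction. With the internal connecting morphisms surjective, each $Z_{\beta_{i+1}}$ is the quotient of $E_{i+1,0}$ by the closure of the increasing family of ideals $\ker(E_{i+1,0}\to E_{i+1,j})$. Semiprojectivity of $E_{i,j}$ therefore permits lifting the composite $E_{i,j}\to Z_{\beta_i}\xrightarrow{\sigma_{i+1,i}}Z_{\beta_{i+1}}$ to an honest morphism $E_{i,j}\to E_{i+1,j'}$ for some sufficiently large $j'$. A standard back-and-forth diagonal, interleaving these recursive lifts with moves upward in the second index to ensure the resulting diagonal is cofinal in the double family $(E_{i,j})_{i,j}$, produces a sequential inductive system of semiprojective \ca{s} $(F_i)$ whose inductive limit coincides with $\varinjlim_i Z_{\beta_i}=A$. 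This gives the required strong shape system. I expect the main obstacle to be precisely this bookkeeping: the lift supplied by semiprojectivity only picks out some $j'$, so care is needed both to maintain compatibility of the connecting morphisms along the diagonal and to ensure that the diagonal exhausts the double family in the limit.
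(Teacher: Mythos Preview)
Your proposal follows essentially the same route as the paper: factor each homotopy-commuting triangle through the mapping cylinder via \autoref{prp:factorization}, intertwine to obtain $A\cong\varinjlim_i Z_{\beta_i}$, and then invoke \autoref{prp:mapping_cylinder_limit_SP} on each term. The only difference is the last step: instead of running the diagonal by hand, the paper cites \cite[Theorem~3.12]{Thi11arX:IndLimPj} (closure of the class with strong shape systems under sequential inductive limits), which packages precisely the lifting-and-diagonalizing argument you sketch---so you can simply invoke that result and avoid the bookkeeping you correctly flag as the delicate point.
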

\begin{proof}
Let $(A_n,\gamma_{n+1,n})$ be a shape system for $A$, and let $(B_n,\theta_{n+1,n})$ be a strong shape system for $B$.
Using that $A\precsim_{\Sh}B$, it follows from \cite[Theorem~4.8]{Bla85ShapeThy} that $(A_n,\gamma_{n+1,n})$ is shape dominated by $(B_n,\theta_{n+1,n})$.
Thus, after reindexing, we may assume that there are morphisms $\alpha_n\colon A_n\to B_n$ and $\beta_n\colon B_n\to A_{n+1}$ such that $\gamma_{n+1,n}\simeq\beta_n\circ\alpha_n$ for all $n\in\NN$.
For each $n\in\NN$, applying \autoref{prp:factorization}, we obtain morphisms $\varphi_n\colon A_n\to Z_{\beta_n}$ and $\omega_n\colon Z_{\beta_n}\to A_{n+1}$ such that $\gamma_{n+1,n}=\omega_n\circ\varphi_n$.
Set $\psi_{n+1,n}:=\varphi_{n+1}\circ\omega_n\colon Z_{\beta_n}\to Z_{\beta_{n+1}}$.
Then the inductive systems $(A_n,\gamma_{n+1,n})$ and $(Z_{\beta_n},\psi_{n+1,n})$ are intertwined, as shown in the following commutative diagram:
\[
\xymatrix{
A_n \ar[dr]_{\varphi_n} \ar[rr]^{\gamma_{n+1,n}}
& & A_{n+1} \ar[dr]_{\varphi_{n+1}} \ar[rr]^{\gamma_{n+2,n+1}}
& & A_{n+2} \ar[dr]_{\varphi_{n+2}} & \ldots \\
& Z_{\beta_n} \ar[ur]^{\omega_n} \ar[rr]_{\psi_{n+1,n}}
& & Z_{\beta_{n+1}} \ar[ur]^{\omega_{n+1}} \ar[rr]_{\psi_{n+2,n+1}}
& & Z_{\beta_{n+2}}\ .
}
\]

It follows that $A\cong\varinjlim_n Z_{\beta_n}$.
By \autoref{prp:mapping_cylinder_limit_SP}, each $Z_{\beta_n}$ is an inductive limit of semiprojective \ca{s}.
Applying \cite[Theorem~3.12]{Thi11arX:IndLimPj}, it follows that $A$ is an inductive limit of semiprojective \ca{s}, as desired.
\end{proof}

\begin{cor}
\label{prp:sss_shEquivalence}
Let $A$ and $B$ be shape equivalent, separable \ca{s}.
Then $A$ has a strong shape system if and only if $B$ does.
\end{cor}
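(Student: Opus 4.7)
The plan is to derive this immediately from the main theorem \autoref{prp:sss_shDomination} by unpacking the definition of shape equivalence into two shape dominations and applying the theorem in each direction. There is essentially no new work to do; the substance lies entirely in the already-established theorem.

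First, I would recall from the discussion in \autoref{sec:shape} that shape equivalence $A \sim_{\Sh} B$ for separable \ca{s} is defined via shape equivalence $\mathcal{A} \sim \mathcal{B}$ of shape systems, and that this relation is symmetric: the intertwining data $(\alpha_i, \beta_i)$ with $\beta_i \circ \alpha_i \simeq \gamma_{n_{i+1},n_i}$ and $\alpha_{i+1} \circ \beta_i \simeq \theta_{k_{i+1},k_i}$ witnesses both $\mathcal{A} \precsim \mathcal{B}$ (via $(\alpha_i, \beta_i)$) and $\mathcal{B} \precsim \mathcal{A}$ (via $(\beta_i, \alpha_{i+1})$, after a harmless reindexing of the subsequences). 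Passing to the level of \ca{s}, this gives $A \precsim_{\Sh} B$ and $B \precsim_{\Sh} A$.

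Next, I would apply \autoref{prp:sss_shDomination} twice. If $B$ has a strong shape system, then from $A \precsim_{\Sh} B$ the theorem yields a strong shape system for $A$. Conversely, if $A$ has a strong shape system, then from $B \precsim_{\Sh} A$ the same theorem yields a strong shape system for $B$. This gives the desired biconditional.

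There is no real obstacle here, so I would expect the proof to be one or two lines. The only minor point worth a sentence of justification is the symmetry of shape equivalence, in case the reader wants to see why $\sim_{\Sh}$ decomposes into two instances of $\precsim_{\Sh}$; but this is immediate from \cite[Definition~4.6]{Bla85ShapeThy} as cited earlier in the paper.
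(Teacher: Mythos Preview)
Your proposal is correct and matches the paper's intent: the corollary is stated without proof precisely because it follows immediately from \autoref{prp:sss_shDomination} once one notes that $A\sim_{\Sh}B$ entails both $A\precsim_{\Sh}B$ and $B\precsim_{\Sh}A$. Your unpacking of the symmetry of shape equivalence is accurate and suffices.
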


\begin{exa}
\label{exa:sss_S2K}
Let $A$ be a \ca.
We set $\Sigma:=C_0(\RR)$.
Further, $\Sigma A:=C_0(\RR)\otimes A$ denotes the suspension of $A$, and similarly $\Sigma^2A:=C_0(\RR^2)\otimes A$ is the second suspension of $A$.

The \ca{} $qA$ was introduced by Cuntz in his study of $KK$-theory.
It is defined as the kernel of the morphism $A\ast A\to A$ obtained from the universal property of the free product $A\ast A$ applied to the identity morphism on both factors.
In particular, we have a short exact sequence
\[
0 \to qA \to A\ast A \to A \to 0.
\]

In \cite{Shu10AsymEquiv-qAK-S2AK}, Shulman showed that the \ca{s} $qA\otimes\KK$ and $\Sigma^2 A\otimes\KK$ are asymptotically equivalent.
Dadarlat showed in \cite{Dad94ShapeThyAsymMor} that the notion of asymptotic equivalence and shape equivalence agree.
Thus, $qA\otimes\KK$ and $\Sigma^2 A\otimes\KK$ are shape equivalent.
Hence, by \autoref{prp:sss_shEquivalence}, $qA\otimes\KK$ has a strong shape system if and only if $\Sigma^2 A\otimes\KK$ does.

Let us consider the case $A=\CC$.
It is known that $q\CC$ is semiprojective;
see \cite[Chapter~16]{Lor97LiftingSolutions}.
It follows that $q\CC\otimes\KK$, and consequently $\Sigma^2\CC\otimes\KK$, has a strong shape system.
Note that $\Sigma^2\CC\otimes\KK$ is isomorphic to $C_0(S^2\setminus\{\ast\})\otimes\KK$, the stabilized algebra of continuous functions on the pointed sphere.
We do not know if the stabilization is necessary, that is, if $\Sigma^2$ has a strong shape system.
By \autoref{prp:sssUnitization}, this is equivalent to the following open question.
\end{exa}

\begin{qst}
Does the \ca{} $C(S^2)$ of continuous functions on the two-sphere have a strong shape system?
\end{qst}

\begin{exa}
\label{exa:ShKirchberg}
Recall that a \emph{Kirchberg algebra} is a separable, purely infinite, simple, nuclear \ca.
To simplify, we call a Kirchberg algebra that satisfies the universal coefficient theorem (UCT) a \emph{UCT-Kirchberg algebra}.
We refer to \cite[Section~23, p.232ff]{Bla98KThy} for details on the UCT.

In \cite[Corollary~4.6]{End15arX:SemiprojKirchberg}, Enders solved a long-standing conjecture of Blackadar by showing that a UCT-Kirchberg algebra is semiprojective if and only if its $K$-groups are finitely generated.
It follows from \cite[Proposition~8.4.13]{Ror02Classification} that every UCT-Kirchberg algebra is isomorphic to an inductive limit of UCT-Kirchberg algebras with finitely generated $K$-groups.
We deduce that every UCT-Kirchberg algebra has a strong shape system.

It follows from \autoref{prp:sss_shDomination} that every separable \ca{} that is shape dominated by a UCT-Kirchberg algebra has a strong shape system.
For example, $C([0,1]^n,A)$ has a strong shape system for every UCT-Kirchberg algebra $A$ and every $n\geq 1$.
\end{exa}

The next result records permanence properties of the class of \ca{s} that are isomorphic to inductive limits of semiprojective \ca{s}.

\begin{thm}
\label{prp:sss_permanence}
The class of separable \ca{s} that have a strong shape system is closed under:
\begin{enumerate}[\quad (1)  ]
\item
countable direct sums;
\item
sequential inductive limits;
\item
approximation by sub-\ca{s} in the sense of \cite[Paragraph~3.1]{Thi11arX:IndLimPj};
\item
shape domination, in particular shape equivalence, homotopy domination and homotopy equivalence;
\item
passing to matrix algebras or stabilization.
\end{enumerate}
\end{thm}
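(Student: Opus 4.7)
The plan is to verify each of the five closure properties in turn. Property (4) is already established by \autoref{prp:sss_shDomination}, and the further consequences (shape equivalence, homotopy domination, homotopy equivalence) reduce to shape domination via the fact, recalled at the end of \autoref{sec:shape}, that shape is coarser than homotopy.

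For (1) and (5), the arguments are direct once one recalls that finite direct sums and matrix amplifications preserve semiprojectivity; see \cite[Lemma~14.1.5 and Theorem~14.2.1]{Lor97LiftingSolutions}. Given a countable family $A_i = \varinjlim_n A_i^{(n)}$ with each $A_i^{(n)}$ separable and semiprojective, setting $B_n := \bigoplus_{i=0}^{n} A_i^{(n)}$ with the obvious connecting morphisms yields a strong shape system for $\bigoplus_i A_i$, since each $B_n$ is a finite direct sum of semiprojectives and hence semiprojective. Likewise, if $A = \varinjlim_n A_n$ with each $A_n$ semiprojective, then $M_k(A) \cong \varinjlim_n M_k(A_n)$ exhibits $M_k(A)$ as an inductive limit of semiprojectives. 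For stabilization, one writes $A\otimes\KK \cong \varinjlim_k M_k(A)$ and then appeals to (2).

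For (2), the sequential-inductive-limits property, I would argue by telescoping. Suppose $A = \varinjlim_n A_n$ with connecting maps $\gamma_{n+1,n}\colon A_n \to A_{n+1}$, where each $A_n$ has a strong shape system $A_n = \varinjlim_k A_n^{(k)}$ with $A_n^{(k)}$ semiprojective. The defining lifting property of semiprojectivity implies that for each $k$ the composition $A_n^{(k)} \to A_n \xrightarrow{\gamma_{n+1,n}} A_{n+1} = \varinjlim_j A_{n+1}^{(j)}$ factors, up to an arbitrarily small perturbation, through some $A_{n+1}^{(j(k))}$. By a standard diagonal extraction one obtains a sequential system $B_n := A_n^{(k_n)}$ of semiprojective algebras, equipped with morphisms $B_n \to B_{n+1}$ that intertwine with the original system $(A_n)$, whence $A \cong \varinjlim_n B_n$ is a strong shape system.

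Finally, (3) follows from (2) together with \cite[Theorem~3.12]{Thi11arX:IndLimPj}, which is the same result used in the proof of \autoref{prp:sss_shDomination}: the class of $C^*$-algebras isomorphic to sequential inductive limits of semiprojective $C^*$-algebras is closed under approximation by sub-$C^*$-algebras in the sense of \cite[Paragraph~3.1]{Thi11arX:IndLimPj}. The only nontrivial step is the diagonal extraction in (2); but this is entirely standard, and the technical engine of the theorem is really \autoref{prp:sss_shDomination}, with everything else following either formally or by invoking external preservation results.
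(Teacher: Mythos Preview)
Your proof is correct and follows essentially the same route as the paper. A few minor points of divergence worth flagging:

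For (2) and (3), the paper simply cites \cite[Theorem~3.12]{Thi11arX:IndLimPj} and \cite[Theorem~3.9]{Thi11arX:IndLimPj}, respectively, without further comment. You instead sketch the diagonal argument underlying (2) directly; this is fine, though your phrase ``the defining lifting property of semiprojectivity'' is slightly loose, since factorization through finite stages of an arbitrary inductive limit is a \emph{consequence} of semiprojectivity rather than its definition. For (3) you have a citation mix-up: the result you describe (closure under approximation by sub-\ca{s}) is \cite[Theorem~3.9]{Thi11arX:IndLimPj}, not Theorem~3.12, and it does not reduce to (2) in any obvious way---you should just cite Theorem~3.9 directly, as the paper does.

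For (5), the paper handles stabilization in a single step by writing $A\otimes\KK\cong\varinjlim_n A_n\otimes M_n$ with the $A_n$ already semiprojective, whereas you first pass to $M_k(A)$ and then invoke (2). Both work; the paper's route avoids the dependence on (2).
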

\begin{proof}
To prove statement~(1), let $(A_n)_{n\in\NN}$ be a sequence of separable \ca{s} with strong shape systems.
For each $n\in\NN$, let $(A_{n,k},\varphi_{k+1,k}^{(n)})_{k}$ be a strong shape system for $A_n$.
Set $B:=\bigoplus_{n\in\NN} A_n$ and set $B_k:=\bigoplus_{n=0}^k A_{n,k}$ for each $k\in\NN$.
Define $\psi_{k+1,k}\colon B_k\to B_{k+1}$ by mapping the summand $A_{n,k}$ in $B_k$ to the summand $A_{n,k+1}$ in $B_{k+1}$ by the map $\varphi_{k+1,k}^{(n)}$, for each $n=0,\ldots,k$.
Then each $B_k$ is semiprojective, and $B\cong\varinjlim_k B_k$, as desired.

Statements~(2) and~(3) follow from Theorem~3.12 and Theorem~3.9 in \cite{Thi11arX:IndLimPj}, respectively.
Statement~(4) follows from \autoref{prp:sss_shDomination}.

To prove statement~(5), let $A$ be a separable \ca{} with $A\cong\varinjlim_n A_n$ for semiprojective \ca{s} $A_n$.
Given $k\in\NN$, we have $A\otimes M_k\cong\varinjlim_n A_n\otimes M_k$.
Further, we have $A\otimes\KK\cong\varinjlim_n A_n\otimes M_n$, with connecting maps $A_n\otimes M_n\to A_{n+1}\otimes M_{n+1}$ given by the amplification of the connecting map $A_n\to A_{n+1}$ to $n\times n$-matrices, followed by the upper left corner embedding $A_{n+1}\otimes M_n\to A_{n+1}\otimes M_n$.
By \cite[Theorem~14.2.2, p.110]{Lor97LiftingSolutions}, a matrix algebra of a separable, semiprojective \ca{} is again semiprojective.
This shows that $A\otimes M_k$ and $A\otimes\KK$ have strong shape systems.
\end{proof}

\section{Nuclear \texorpdfstring{$C^*$-algebras}{C*-algebras} with strong shape system}
\label{sec:nuclearSSS}

In this section, we show that every separable, stable, nuclear, homotopy symmetric \ca{} satisfying the universal coefficient theorem (UCT) in $KK$-theory and with torsion-free $K_0$-group has a strong shape system;
\autoref{prp:ssa_for_HS}.
Hence, if $A$ is a separable, nuclear \ca{} satisfying the UCT, then the stable suspension $\Sigma A\otimes\KK$ has a strong shape system if $K_1(A)$ is torsion-free, and the stable second suspension $\Sigma^2 A\otimes\KK$ has a strong shape system if $K_0(A)$ is torsion-free;
see \autoref{prp:sss_suspension_nuclear}.

The notion of `homotopy symmetry' was introduced by Dadarlat and Loring in \cite[Section~5]{DadLor94UnsuspendedEThy}, to which we refer for the definition.
For separable, nuclear \ca{s}, Dadarlat and Pennig showed in \cite[Theorem~3.1]{DadPen17DefNilpotHomSym} that homotopy symmetry is equivalent to `connectivity', which is defined via an embedding in a special \ca{} together with a lifting property:
A (separable) \ca{} $A$ is said to be \emph{connective} if there exists an injective morphism $A\to \prod_n C\Bdd(H) / \bigoplus_n C\Bdd(H)$ that admits a completely positive, contractive lift $A\to \prod_n C\Bdd(H)$, where $C\Bdd(H):=C_0((0,1],\Bdd(H))$ is the cone over $\Bdd(H)$;
see \cite[Definition~2.1]{DadPen17ConnectiveCa}.
(Note that in \cite{DadPen17DefNilpotHomSym}, connectivity is called property (QH).)

For homotopy symmetric \ca{s}, shape theory and $E$-theory are closely related:
By combining results in \cite{DadLor94UnsuspendedEThy} and \cite{Dad94ShapeThyAsymMor}, we obtain that two separable, stable, homotopy symmetric \ca{s} are shape equivalent if and only if they are $E$-equivalent;
see \autoref{prp:ShEquiv-EEquiv_for_HS}.
We refer to \cite{Bla98KThy} for details on $KK$-theory, $E$-theory and the UCT.

To prove \autoref{prp:ssa_for_HS}, we construct for every given pair $(G_0,G_1)$ of countable, abelian groups with $G_0$ torsion-free a model \ca{} $A$ that is stable, nuclear, homotopy symmetric, satisfies the UCT, has a strong shape system, and such that $K_\ast(A)\cong(G_0,G_1)$;
see \autoref{prp:realizeKThy_sssHs}.
It is not clear if such model \ca{s} exist with torsion in $K_0$;
see \autoref{qst:exist_sssHS} and \autoref{rmk:exist_sssHS}.

\begin{pgr}
We set $\Sigma:=C_0(\RR)$.
Given $n\in\NN$ with $n\geq 2$, we define
\begin{align*}
I_n  := \big\{ f\in C((0,1],M_n) : f(1)\in\CC 1_{M_n} \big\}.
\end{align*}
The \ca{} $I_n$ is called the (nonunital) \emph{dimension-drop algebra} of order $n$.
We have
\[
K_\ast(\Sigma)\cong(0,\ZZ),\andSep K_\ast(I_n)\cong(0,\ZZ_n).
\]
The \ca{s} $\Sigma$ and $I_n$ are homotopy symmetric;
see Propositions~5.3 and~6.1 in \cite{DadLor94UnsuspendedEThy}.
\end{pgr}

\begin{lma}
\label{prp:realizeKThy_sssHs}
Let $G_0$ and $G_1$ be countable, abelian groups.
Assume that $G_0$ is torsion-free.
Then there exists a stable, nuclear, homotopy symmetric \ca{} $A$ satisfying the UCT, with a strong shape system, and such that $K_\ast(A)\cong(G_0,G_1)$.
\end{lma}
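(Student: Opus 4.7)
The plan is to build $A$ as a sequential inductive limit of finite direct sums of elementary building blocks whose properties can be verified by hand, then transport these properties to the limit via \autoref{prp:sss_permanence} and standard inductive-limit arguments.

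I use three building-block families: $\Sigma^2\otimes\KK$ with $K_*\cong(\ZZ,0)$, $\Sigma\otimes\KK$ with $K_*\cong(0,\ZZ)$, and $I_n\otimes\KK$ ($n\geq 2$) with $K_*\cong(0,\ZZ_n)$. Each is nuclear, stable, satisfies the UCT, and is homotopy symmetric: for $\Sigma$ and $I_n$ this is recorded above, and the suspension of a homotopy symmetric \ca{} remains such. Each also has a strong shape system: $\Sigma^2\otimes\KK$ by \autoref{exa:sss_S2K}, while both $\Sigma\cong C_0((0,1))$ (whose forced unitization is the universal \ca{} $C(S^1)$ on one unitary) and $I_n$ are semiprojective, so \autoref{prp:sss_permanence}(5) applies. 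Since $G_0$ is countable and torsion-free, I write $G_0=\varinjlim_n F_n$ with each $F_n\cong\ZZ^{k_n}$ finitely generated free; similarly $G_1=\varinjlim_n H_n$ with $H_n\cong\ZZ^{l_n}\oplus\bigoplus_i\ZZ_{m_{n,i}}$ finitely generated. I set
\[
A_n := (\Sigma^2\otimes\KK)^{k_n}\oplus(\Sigma\otimes\KK)^{l_n}\oplus\bigoplus_i(I_{m_{n,i}}\otimes\KK),
\]
so that $K_*(A_n)\cong(F_n,H_n)$, and $A_n$ inherits all the required structural properties: stability, nuclearity, UCT, and homotopy symmetry are each closed under finite direct sums, and the strong-shape-system property follows from \autoref{prp:sss_permanence}(1).

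For each $n$, I then construct a \stHom{} $\phi_n\colon A_n\to A_{n+1}$ realizing the prescribed $K$-theory map $(F_n,H_n)\to(F_{n+1},H_{n+1})$. Via the direct-sum decomposition this reduces to producing \stHom{s} between pairs of building blocks that realize prescribed integer or modular $K$-theoretic maps, and the $\KK$-factor supplies enough room: multiplication by $d$ on the $K$-group of $\Sigma^i\otimes\KK$ is realized by a diagonal $d$-fold sum combined with an identification $M_d(\KK)\cong\KK$, and the quotient maps $\ZZ\to\ZZ_n$ and $\ZZ_n\to\ZZ_m$ (for $m\mid n$) come from the obvious \stHom{s} between $\Sigma$, $I_n$ and $I_m$. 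A more abstract route is to invoke the UCT to lift the $K$-theory map to a $KK$-class and then use homotopy symmetry of $A_n$, following \cite{DadLor94UnsuspendedEThy}, to represent that class by a genuine \stHom{} up to asymptotic equivalence. Setting $A:=\varinjlim_n A_n$, continuity of $K$-theory gives $K_*(A)\cong(G_0,G_1)$; nuclearity, stability and the UCT pass to sequential inductive limits, and so does homotopy symmetry in the connectivity picture of \cite{DadPen17DefNilpotHomSym}; finally, by \autoref{prp:sss_permanence}(2), $A$ has a strong shape system.

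The main obstacle is the bookkeeping in the connecting-map step. Nonzero $K$-theory maps between building blocks are constrained in direction: $\operatorname{Hom}(\ZZ_n,\ZZ)=0$ forces any morphism $I_n\otimes\KK\to\Sigma\otimes\KK$ to vanish on $K$-theory, and $\operatorname{Hom}(\ZZ_n,\ZZ_m)$ is nontrivial only when $\gcd(n,m)>1$. Consequently the decompositions $G_0=\varinjlim F_n$ and $G_1=\varinjlim H_n$ must be chosen (or refined by reindexing and splitting torsion summands) so that every required map between building blocks runs in an allowed direction before the $\phi_n$ can be assembled. The torsion-freeness of $G_0$ is essential to this scheme: had $G_0$ contained torsion, one would need a building block with "$\ZZ_n$ in $K_0$" that is simultaneously homotopy symmetric and carries a strong shape system, and no such \ca{} is presently known. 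This is precisely the open question recorded in \autoref{rmk:exist_sssHS}.
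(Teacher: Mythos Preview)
Your approach is correct and essentially the same as the paper's: the paper simply separates the construction into $A_0$ realizing $(G_0,0)$ (taken as $\Sigma^2 B\otimes\KK$ for an AF-algebra $B$ with $K_0(B)\cong G_0$, which unwinds to your direct sums of copies of $\Sigma^2\otimes\KK$) and $A_1$ realizing $(0,G_1)$ (exactly your $\Sigma$- and $I_n$-based inductive limit), and then sets $A:=A_0\oplus A_1$. The connecting-map bookkeeping you flag is dispatched in the paper by a direct appeal to \cite[Lemma~13.2.1]{RorLarLau00KThy}, so no refinement of the torsion decomposition is actually needed; your ``abstract route'' via asymptotic morphisms, however, should be dropped, since it does not produce genuine \stHom{s} and hence does not yield an honest inductive system.
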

\begin{proof}
We will construct stable, nuclear, homotopy symmetric \ca{s} $A_0$ and $A_1$ that satisfy the UCT, that have strong shape systems, and that satisfy $K_\ast(A_0)\cong(G_0,0)$ and $K_\ast(A_1)\cong(0,G_1)$.
Since the properties of being stable, nuclear, homotopy symmetric, satisfying the UCT, and having a strong shape system are preserved by direct sums, it will follow that $A:=A_0\oplus A_1$ has the desired properties.

To construct $A_0$, we use that every countable, torsion-free, abelian group arises as the $K_0$-group of a separable AF-algebra;
see for example \cite[Corollary~23.10.4, p.241]{Bla98KThy}.
Thus, we may choose a separable AF-algebra $B$ with $K_0(B)\cong G_0$.
Set $A_0:=\Sigma^2 B \otimes \KK$.
Then $A_0$ is clearly nuclear and satisfies $K_0(A_0)\cong (G_0,0)$.
By \cite[Lemma~5.1]{DadLor94UnsuspendedEThy}, the tensor product of a (nuclear) homotopy symmetric \ca{} with any other \ca{} is again homotopy symmetric.
Thus, since $\Sigma$ is homotopy symmetric, so is $A_0$.

Let $F_n$ be finite-dimensional \ca{s} such that $B\cong\varinjlim_n F_n$.
Then
\[
\Sigma^2 B\otimes\KK \cong \varinjlim_n (\Sigma^2 F_n \otimes \KK).
\]
Thus, by \autoref{prp:sss_permanence}(2), it is enough to verify that each $\Sigma^2 F_n \otimes \KK$ has a strong shape system.
Since $F_n$ is a direct sum of matrix algebras, applying \autoref{prp:sss_permanence}(1), it is even enough to show that $\Sigma^2 M_k \otimes \KK$ has a strong shape system.
This follows from \autoref{exa:sss_S2K} since $\Sigma^2 M_k \otimes \KK \cong \Sigma^2\CC\otimes\KK$.

To construct $A_1$, choose finitely generated, abelian groups $H_n$ and group homomorphisms $\varphi_{n+1,n}\colon H_n\to H_{n+1}$, for $n\in\NN$, such that $G_1\cong \varinjlim_n H_n$.
Every finitely generated, abelian group is a finite direct sum of cyclic groups.
Thus, for each $n$ there exist $e_n\in\NN$ and cyclic groups $H_{n,0},\ldots,H_{n,e_n}$ such that
\[
H_n \cong H_{n,0} \oplus \ldots \oplus H_{n,e_n}.
\]
Using theses decompositions of $H_n$ and $H_{n+1}$, the morphism $\varphi_{n+1,n}$ corresponds to a tuple $(\varphi_{n+1,n}^{(s,t)})_{s,t}$ of morphisms $\varphi_{n+1,n}^{(s,t)}\colon H_{n,s}\to H_{n+1,t}$, for $s=0,\ldots,e_n$ and $t=0,\ldots,e_{n+1}$.
Given $n\in\NN$ and $k\in\{0,\ldots,e_n\}$, set $B_{n,k}:=\Sigma$ if $H_{n,k}\cong\ZZ$, and set $B_{n,k}:=I_m$ if $H_{n,k}\cong\ZZ_m$.
We further define
\[
B_n := B_{n,0} \oplus \ldots \oplus B_{n,e_n}.
\]
Then $K_\ast(B_n)\cong (0,H_n)$.

Let $n\in\NN$, $s\in\{0,\ldots,e_n\}$ and $t\in\{0,\ldots,e_{n+1}\}$.
It follows from \cite[Lemma~13.2.1, p.222]{RorLarLau00KThy} that there exists a morphism $\psi_{n+1,n}^{(s,t)}\colon B_{n,s}\otimes\KK\to B_{n+1,t}\otimes\KK$ such that $K_1(\psi_{n+1,n}^{(s,t)})=\varphi_{n+1,n}^{(s,t)}$.
This induces a morphism $\psi_{n+1,n}\colon B_n\otimes\KK\to B_{n+1}\otimes\KK$ with $K_1(\psi_{n+1,n})=\varphi_{n+1,n}$.

Let $A_1$ be the inductive limit of the system $(B_n,\psi_{n+1,n})$.
Then
\[
K_1(A_1) \cong\varinjlim_n K_1(B_n) \cong\varinjlim_n H_n \cong G_1.
\]
Further, $K_0(B_n)\cong 0$ for each $n$, and therefore $K_0(A_1)\cong 0$.

Each $B_n$ is stable, nuclear and satisfies the UCT, whence $A_1$ has the same properties.
Moreover, each $B_n$ is homotopy symmetric.
By \cite[Theorem~3]{Dad93AsymHtpy}, homotopy symmetry passes to sequential inductive limits of separable \ca{s}.
Moreover, $A_1$ has a strong shape system since each $B_n$ is semiprojective.
\end{proof}

\begin{lma}
\label{prp:ShEquiv-EEquiv_for_HS}
Let $A$ and $B$ be separable, stable, homotopy symmetric \ca{s}.
Then $A$ and $B$ are shape equivalent if and only if they are $E$-equivalent.
\end{lma}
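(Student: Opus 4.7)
The plan is to combine two classical results, one for each direction: Dadarlat's theorem \cite{Dad94ShapeThyAsymMor} that, for separable \ca{s}, shape equivalence coincides with \emph{asymptotic equivalence} (existence of asymptotic morphisms $\varphi\colon A\to B$ and $\psi\colon B\to A$ whose compositions are asymptotically homotopic to $\id_A$ and $\id_B$), and the Dadarlat--Loring unsuspended $E$-theory theorem \cite{DadLor94UnsuspendedEThy}, which asserts that for a separable, stable, homotopy symmetric \ca{} $A$ and a stable, $\sigma$-unital \ca{} $B$, the canonical map from the group of asymptotic homotopy classes $[[A,B]]$ to $E(A,B)$ is a bijection that is natural and compatible with composition.

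For the direction shape equivalence $\Rightarrow$ $E$-equivalence, I would apply Dadarlat's theorem to convert a shape equivalence between $A$ and $B$ into an asymptotic equivalence $(\varphi,\psi)$. Pushing $\varphi$ and $\psi$ through the canonical map $[[\cdot,\cdot]]\to E(\cdot,\cdot)$, which is multiplicative and carries identities to identities, then produces mutually inverse $E$-classes in $E(A,B)$ and $E(B,A)$; note that this direction does not use homotopy symmetry. For the converse, given an $E$-equivalence I would invoke the Dadarlat--Loring bijection simultaneously on the four pairs $(A,B)$, $(B,A)$, $(A,A)$, $(B,B)$, each of which is of the required form since $A$ and $B$ are both stable and homotopy symmetric. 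The given $E$-equivalence and its inverse lift to asymptotic morphisms $\varphi\colon A\to B$ and $\psi\colon B\to A$, and the compatibility of the bijections with composition forces $\psi\circ\varphi$ and $\varphi\circ\psi$ to coincide with $\id_A$ and $\id_B$ in $[[A,A]]$ and $[[B,B]]$; hence they are asymptotically homotopic to the identities. A final invocation of Dadarlat's theorem then converts this asymptotic equivalence into a shape equivalence.

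The principal technical point is the categorical compatibility used in the converse: one must be sure that the Dadarlat--Loring identifications across all four hom-sets respect composition, so that an $E$-theory inverse lifts to a genuine asymptotic inverse rather than merely to an asymptotic morphism inducing the inverse at the level of $E$-theory. This is precisely the naturality and multiplicativity packaged with the unsuspended $E$-theory theorem, so the remaining work consists in quoting the correct formulation from \cite{DadLor94UnsuspendedEThy} and checking that all four pairs at issue do satisfy its hypotheses (stability and homotopy symmetry), which they do by assumption.
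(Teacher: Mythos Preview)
Your proposal is correct and follows essentially the same route as the paper: invoke Dadarlat's identification of shape equivalence with equivalence in the asymptotic homotopy category \cite[Theorem~3.9]{Dad94ShapeThyAsymMor}, and then the Dadarlat--Loring unsuspended $E$-theory result \cite{DadLor94UnsuspendedEThy} to pass between asymptotic equivalence and $E$-equivalence for stable, homotopy symmetric \ca{s}. The only difference is one of packaging: the paper cites \cite[Theorem~4.3]{DadLor94UnsuspendedEThy} as a single black box (``two stable, homotopy symmetric \ca{s} are equivalent in $\mathcal{A}$ if and only if they are $E$-equivalent''), whereas you unpack this into the bijection $[[A,B]]\to E(A,B)$ and its compatibility with composition---which is exactly what underlies that theorem.
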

\begin{proof}
By \cite[Theorem~3.9]{Dad94ShapeThyAsymMor}, two separable \ca{s} are shape equivalent if and only if they are equivalent in the asymptotic homotopy category $\mathcal{A}$ studied in \cite{Dad94ShapeThyAsymMor}.
By \cite[Theorem~4.3]{DadLor94UnsuspendedEThy}, two stable, homotopy symmetric \ca{s} are equivalent in $\mathcal{A}$ if and only if they are $E$-equivalent.
\end{proof}


\begin{thm}
\label{prp:ssa_for_HS}
Let $A$ be a separable, stable, nuclear, homotopy symmetric \ca{} satisfying the UCT.
Assume that $K_0(A)$ is torsion-free.
Then $A$ has a strong shape system.
\end{thm}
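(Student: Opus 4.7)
The plan is to reduce the problem to \autoref{prp:realizeKThy_sssHs} via the shape--equivalence bridge furnished by \autoref{prp:ShEquiv-EEquiv_for_HS} and the fact that, under UCT, $K$-theory is a complete $E$-theory invariant.

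First, I would apply \autoref{prp:realizeKThy_sssHs} to the pair $(G_0,G_1):=(K_0(A),K_1(A))$. Since $G_0=K_0(A)$ is torsion-free by hypothesis, the lemma yields a separable, stable, nuclear, homotopy symmetric \ca{} $B$ that satisfies the UCT, has a strong shape system, and has $K_\ast(B)\cong (K_0(A),K_1(A))=K_\ast(A)$ as $\ZZ/2$-graded abelian groups.

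Next, I would upgrade this $K$-theoretic agreement to an $E$-equivalence between $A$ and $B$. Both \ca{s} are separable, nuclear and satisfy the UCT, so by the Rosenberg--Schochet universal coefficient theorem any isomorphism $K_\ast(A)\xrightarrow{\cong} K_\ast(B)$ lifts to an invertible element of $KK(A,B)$; in particular $A$ and $B$ are $KK$-equivalent. Since both \ca{s} are nuclear, $KK$-equivalence implies $E$-equivalence (for nuclear algebras the natural map $KK\to E$ is an isomorphism on equivalence classes, or more concretely, $E$-equivalence is preserved under tensoring with stable/nuclear factors and a $KK$-invertible element gives invertible asymptotic morphisms in either direction). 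Since $A$ and $B$ are, moreover, separable, stable and homotopy symmetric, \autoref{prp:ShEquiv-EEquiv_for_HS} converts this $E$-equivalence into shape equivalence: $A\sim_{\Sh} B$.

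Finally, \autoref{prp:sss_shEquivalence} (or directly \autoref{prp:sss_shDomination}) shows that having a strong shape system is an invariant of shape equivalence. Since $B$ was constructed with a strong shape system, it follows that $A$ has a strong shape system as well.

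The main obstacle here is psychological rather than technical: one must ensure that the transition ``same $K$-theory $\Rightarrow$ $E$-equivalent'' is correctly justified for the class of algebras at hand. The torsion-freeness of $K_0(A)$ is essential only inside \autoref{prp:realizeKThy_sssHs}, where it is needed to realize $G_0$ by a stable suspension of an AF-algebra; once a model $B$ with the right $K$-theory is in hand, the rest is a formal assembly of UCT ($\Rightarrow KK$-equivalence), nuclearity ($\Rightarrow$ $E$-equivalence), homotopy symmetry plus stability ($\Rightarrow$ shape equivalence by \autoref{prp:ShEquiv-EEquiv_for_HS}), and the permanence result \autoref{prp:sss_shEquivalence}.
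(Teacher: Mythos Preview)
Your proposal is correct and follows essentially the same route as the paper's proof: construct the model $B$ via \autoref{prp:realizeKThy_sssHs}, then argue $K_\ast(A)\cong K_\ast(B)\Rightarrow A\sim_{KK}B\Rightarrow A\sim_E B\Rightarrow A\sim_{\Sh}B$ using UCT, nuclearity, and \autoref{prp:ShEquiv-EEquiv_for_HS}, and finish with \autoref{prp:sss_shDomination}. The paper cites \cite[Corollary~23.10.2]{Bla98KThy} and \cite[Theorem~25.6.3]{Bla98KThy} for the first two implications, which is exactly what you are invoking.
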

\begin{proof}
Apply \autoref{prp:realizeKThy_sssHs} to obtain a stable, nuclear, homotopy symmetric \ca{} $B$ satisfying the UCT, with a strong shape system, and such that $K_\ast(A)\cong K_\ast(B)$.
We will show the following implications:
\[
K_\ast(A)\cong K_\ast(B)
\ \Rightarrow\ A\sim_{KK} B
\ \Rightarrow\ A\sim_E B
\ \Rightarrow\ A\sim_{\Sh} B,
\]
where $\sim_{KK}$, $\sim_E$ and $\sim_{\Sh}$ mean $KK$-equivalence, $E$-equivalence and shape equivalence, respectively.

The first implication follows from \cite[Corollary~23.10.2, p.241]{Bla98KThy}, which shows that two \ca{s} satisfying the UCT are $KK$-equivalent if (and only if) they have isomorphic $K$-groups.
The second implication follows from \cite[Theorem~25.6.3, p.278]{Bla98KThy}, which shows that $E$-theory and $KK$-theory agree for separable, nuclear \ca{s}.
The third implication follows from \autoref{prp:ShEquiv-EEquiv_for_HS}.

Hence, $A$ and $B$ are shape equivalent.
Since $B$ has a strong shape system, it follows from \autoref{prp:sss_shDomination} that $A$ does as well.
\end{proof}

\begin{rmks}
(1)
\autoref{prp:ssa_for_HS} still holds if the assumption that $A$ is nuclear and satisfies the UCT is replaced by the assumption that $A$ satisfies the UCT for $E$-theory as considered for example in \cite[25.7.5, p.281]{Bla98KThy}.

(2)
It is easy to construct \ca{s} to which \autoref{prp:ssa_for_HS} applies.
Let $B$ be a nuclear, stable, homotopy symmetric \ca{} satisfying the UCT.
By \cite[22.3.5(f), p.229]{Bla98KThy}, the class of nuclear \ca{s} satisfying the UCT is closed under tensor products.
Hence, for any nuclear \ca{} $A$ satisfying the UCT, the tensor product $A\otimes B$ is nuclear, stable, homotopy symmetric and satisfies the UCT.

Examples of homotopy symmetric \ca{s} include $C_0(X\setminus\{\ast\})$ for any connected, compact, metrizable space $X$;
see \cite{Dad93AsymHtpy} and \cite[Proposition~5.3]{DadLor94UnsuspendedEThy}.

(3)
It was recently shown by Gabe that a separable, nuclear \ca{} is homotopy symmetric if and only if its primitive ideal space contains no non-empty, compact, open subsets;
see \cite[Corollary~E]{Gab18arX:tracelessAFemb}.
\end{rmks}

\begin{cor}
\label{prp:sss_suspension_nuclear}
Let $A$ be a separable, nuclear \ca{} satisfying the UCT.
If $K_0(A)$ is torsion-free, then $\Sigma^2 A\otimes\KK$ has a strong shape system.
If $K_1(A)$ is torsion-free, then $\Sigma A\otimes\KK$ has a strong shape system.
\end{cor}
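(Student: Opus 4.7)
The plan is to apply Theorem~\ref{prp:ssa_for_HS} directly to the algebras $\Sigma A\otimes\KK$ and $\Sigma^2 A\otimes\KK$. Thus I would verify that each of these algebras satisfies the full list of hypotheses of that theorem: separability, stability, nuclearity, satisfaction of the UCT, homotopy symmetry, and torsion-freeness of $K_0$.

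Separability is preserved by taking suspensions and stabilizing, and stability is automatic once we tensor with $\KK$. Nuclearity and the UCT pass through tensor products with the nuclear UCT algebras $\Sigma$, $\Sigma^2$, and $\KK$; for the UCT this is the standard closure property, e.g.\ \cite[22.3.5(f), p.229]{Bla98KThy} as already invoked in the preceding remarks. For homotopy symmetry, I would use that $\Sigma=C_0(\RR)$ is homotopy symmetric (recorded earlier in the paper, from \cite[Proposition~5.3]{DadLor94UnsuspendedEThy}) together with \cite[Lemma~5.1]{DadLor94UnsuspendedEThy}: the tensor product of a nuclear homotopy symmetric \ca{} with an arbitrary \ca{} is homotopy symmetric. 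This yields that both $\Sigma A\otimes\KK$ and $\Sigma^2A\otimes\KK$ are homotopy symmetric.

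The remaining and decisive input is the computation of $K_0$ of the relevant suspension. By the suspension isomorphism (Bott periodicity), $K_0(\Sigma A\otimes\KK)\cong K_0(\Sigma A)\cong K_1(A)$ and $K_0(\Sigma^2 A\otimes\KK)\cong K_0(\Sigma^2 A)\cong K_0(A)$. Hence the assumption that $K_1(A)$ is torsion-free translates to $K_0(\Sigma A\otimes\KK)$ being torsion-free, and likewise $K_0(A)$ torsion-free gives $K_0(\Sigma^2 A\otimes\KK)$ torsion-free. In each case all the hypotheses of Theorem~\ref{prp:ssa_for_HS} are met, and the conclusion is that the algebra in question has a strong shape system.

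There is essentially no obstacle here, as this is a direct specialization of Theorem~\ref{prp:ssa_for_HS}; the only thing to be careful about is to cite the correct closure results for homotopy symmetry and the UCT under tensoring by $\Sigma$ and $\KK$, and to apply Bott periodicity in the right degree for the two cases.
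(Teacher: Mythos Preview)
Your proposal is correct and is precisely the intended argument: the paper states the corollary without proof because it is an immediate application of Theorem~\ref{prp:ssa_for_HS} to $\Sigma A\otimes\KK$ and $\Sigma^2 A\otimes\KK$, using exactly the closure properties for nuclearity, the UCT, and homotopy symmetry that you cite, together with the suspension isomorphism in $K$-theory.
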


\begin{exa}
\label{exa:sss_HS_Kirchberg}
Recall from \autoref{exa:ShKirchberg} that every UCT-Kirchberg algebra has a strong shape system.
Let $A$ be a UCT-Kirchberg algebra, let $X$ be a connected, compact, metrizable space, and let $x\in X$.
If $K_0(C_0(X\setminus\{x\})\otimes A)$ is torsion-free, then $C_0(X\setminus\{x\})\otimes A\otimes\KK$ has a strong shape system.
\end{exa}

The restriction on the $K$-theory in \autoref{prp:realizeKThy_sssHs} and \autoref{prp:ssa_for_HS} arises since we do not know if there exist suitable building blocks realizing torsion in $K_0$.
In particular, we do not know the answer to the following question.

\begin{qst}
\label{qst:exist_sssHS}
Does $\Sigma I_n\otimes\KK$ have a strong shape system, for $n\geq 2$?
\end{qst}

\begin{rmk}
\label{rmk:exist_sssHS}
Let $n\geq 2$.
\autoref{qst:exist_sssHS} is closely related to \cite[Question~7.2]{DadLor94UnsuspendedEThy}, where Dadarlat and Loring ask if there exists a separable, nuclear, homotopy symmetric, \emph{semiprojective} \ca{} with $K$-theory $(\ZZ_n,0)$.

Note that $\Sigma I_n\otimes\KK$ is separable, stable, nuclear, homotopy symmetric and satisfies the UCT.
Hence, the argument in the proof of \autoref{prp:ssa_for_HS} shows that \autoref{qst:exist_sssHS} is equivalent to the following:
Does there exist \emph{any} separable, stable, nuclear, homotopy symmetric \ca{} $D$ satisfying the UCT, with strong shape system, and such that $K_\ast(D)\cong(\ZZ_n,0)$?

In particular, a positive answer to the question of Dadarlat and Loring (additionally assumed to satisfy the UCT) implies a positive answer to \autoref{qst:exist_sssHS}.
\end{rmk}


\providecommand{\bysame}{\leavevmode\hbox to3em{\hrulefill}\thinspace}
\providecommand{\noopsort}[1]{}
\providecommand{\mr}[1]{\href{http://www.ams.org/mathscinet-getitem?mr=#1}{MR~#1}}
\providecommand{\zbl}[1]{\href{http://www.zentralblatt-math.org/zmath/en/search/?q=an:#1}{Zbl~#1}}
\providecommand{\jfm}[1]{\href{http://www.emis.de/cgi-bin/JFM-item?#1}{JFM~#1}}
\providecommand{\arxiv}[1]{\href{http://www.arxiv.org/abs/#1}{arXiv~#1}}
\providecommand{\doi}[1]{\url{http://dx.doi.org/#1}}
\providecommand{\MR}{\relax\ifhmode\unskip\space\fi MR }
\providecommand{\MRhref}[2]{%
	\href{http://www.ams.org/mathscinet-getitem?mr=#1}{#2}
}
\providecommand{\href}[2]{#2}

\end{document}